\newtheorem{thm}{Theorem}
\newtheorem{lem}{Lemma}
\newtheorem{prop}{Proposition}
\theoremstyle{definition}
\newtheorem{defn}{Definition}
\newtheorem{rem}{Remark}
\newtheorem{cor}{Corollary}
\definecolor{green}{rgb}{0.0, 0.75, 0.25} %\fk
\def\eea{\end{eqnarray}}
\renewcommand{\emph}{\textit}
\DeclareMathOperator{\bd}{bd}
\DeclareMathOperator{\sign}{sign}
\begin{document}

%%%%%%%%%%%%%%%%%%%%%%%

\title[Mono-monostatic polyhedra with point masses have at least 8 vertices]{Mono-monostatic polyhedra with uniform point masses have at least 8 vertices}

\author[S. Boz\'oki, G. Domokos, F. Kov\'acs and K. Reg\H{o}s] {S\'andor Boz\'oki, G\'abor Domokos, Fl\'ori\'an Kov\'acs and Krisztina Reg\H{o}s}
\address{G\'abor Domokos, MTA-BME Morphodynamics Research Group and Dept. of Mechanics, Materials and Structures, Budapest University of Technology,
M\H uegyetem rakpart 1-3., Budapest, Hungary, 1111}
\email{domokos@iit.bme.hu}
\address{Fl\'ori\'an Kov\'acs, MTA-BME Morphodynamics Research Group and Dept. of Structural Mechanics, Budapest University of Technology, M\H uegyetem rakpart 1-3., Budapest, Hungary, 1111}
\email{kovacs.florian@emk.bme.hu}
\address{Krisztina Reg\H os, MTA-BME Morphodynamics Research Group,
M\H uegyetem rakpart 1-3., Budapest, Hungary, 1111}
\email{regos.kriszti@gmail.com}
\address{S\'andor Boz\'oki, Research Group of Operations Research and Decision Systems, Research Laboratory on Engineering and Management Intelligence, Institute for Computer Science and Control (SZTAKI), E\"otv\"os Lor\'and Research Network (ELKH), Kende utca 13-17, Budapest, Hungary, 1111}
\email{bozoki.sandor@sztaki.hu}

\thanks{Support of the NKFIH Hungarian Research Fund grant 134199 and of the TKP2020 IES Grant No. TKP2020  BME-IKA-VIZ is kindly acknowledged. The research of S. Boz\'oki was supported by the Hungarian National Research, Development and Innovation Office (NKFIH) under Grant NKFIA ED\_18-2-2018-0006.}
\subjclass[2010]{52B10, 77C20, 52A38}

\keywords{polyhedron, static equilibrium, monostatic polyhedron, polynomial inequalities}

\maketitle

%\tableofcontents

\begin{abstract}
The monostatic property of convex polyhedra (i.e. the property of having just one stable or unstable static equilibrium point) has been in the focus of research ever since Conway and Guy \cite{Conway} published the proof of the existence of the first such object, followed by the constructions of Bezdek \cite{Bezdek} and Reshetov \cite{Reshetov}. These examples establish $F\leq 14, V\leq 18$ as the respective \emph{upper bounds} for the minimal number of faces and vertices for a homogeneous mono-stable polyhedron. By proving that no mono-stable homogeneous tetrahedron existed, Conway and Guy \cite{Conway} established for the same problem the lower bounds for the number of faces and vertices as $F, V \geq 5$  and the same lower bounds were also established \cite{balancing} for the mono-unstable case. It is also clear that the $F,V \geq 5$ bounds also apply for convex, homogeneous point sets with unit masses at each point (also called polyhedral 0-skeletons) and they are also valid for mono-monostatic polyhedra
with exactly on stable and one unstable equilibrium point (both homogeneous and 0-skeletons). Here we present an algorithm by which we improve the lower bound to $V\geq 8$ vertices (implying $f \geq 6$ faces)  on mono-unstable and mono-monostable 0-skeletons. Our algorithm appears to be less well suited to compute the lower bounds for mono-stability. We point out these difficulties in connection with the work of
Dawson and Finbow \cite{Dawson, Dawson2, Dawson3} who explored the monostatic property of simplices in higher dimensions. \end{abstract}

\section{Introduction}\label{sec:intro}

\subsection{Mono-stability and polyhedra}
The minimal number of positions where a rigid body could be at rest, also called static equilibria, have been investigated at least since Archimedes developed his famous design for ships \cite{Archimedes}. Having one stable position (i.e. being \emph{mono-stable}) is of obvious advantage for ships, however, for rigid bodies under gravity, supported on a rigid (frictionless) surface this property, facilitating self-righting,  could also be  of advantage. 

Beyond being useful, mono-stable bodies also appear to be of particular mathematical interest, because it is still unclear
what are the minimal numbers $F^S, V^S$ of faces and vertices for a homogeneous, convex,  mono-stable polyhedron. The first such object with $F=19$ faces and $V=34$ vertices has been described by Conway and Guy in 1967 \cite{Conway}. This construction was improved by Bezdek \cite{Bezdek} to $(F,V)=(18,18)$ and later by Reshetov \cite{Reshetov} to $(F,V)=(14,24)$. These values define the best known \emph{upper bounds}, so we have  $F^S \leq 14 , V^S \leq 18$. Even less is known about the lower bounds: the only result is due to Conway \cite{Dawson} who proved that a homogeneous tetrahedron has at least two stable equilibria, so we have $F^S, V^S \geq 5$. The gap between the upper and lower bounds (illustrated in Figure \ref{fig:1}) is substantial. 

\begin{figure}[ht]
\begin{center}
\includegraphics[width=\columnwidth]{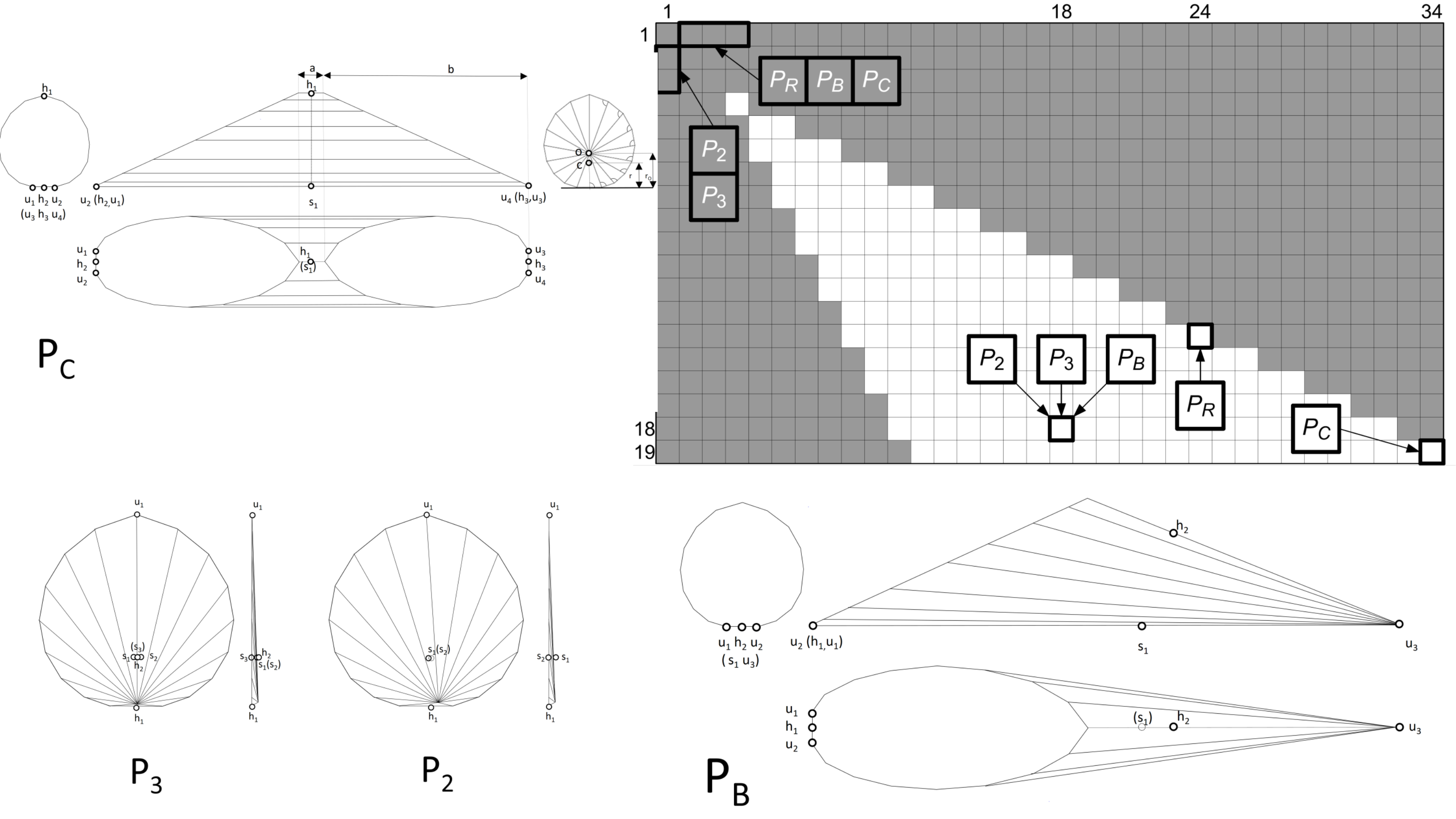}
\caption{Illustration of some monostatic polyhedra and their location on the overlay of the  $[F,V]$ (face and vertex number) grid  and $[S,U]$ (number of stable and unstable equilibria) grid: Conway's polyhedron \cite{Conway}
$P_C$ in classes $(F,V)=(19,34), (S,U)=(1,4)$,  Bezdek's  polyhedron \cite{Bezdek}, $P_B$ in classes $(F,V)=(18,18), (S,U)=(1,3)$, Reshetov's polyhedron, \cite{Reshetov} $P_R$ in classes $(F,V)=(14,24), (S,U)=(1,2)$. Mono-unstable polyhedra $P_2,P_3$ in classes  $(F,V)=(18,18)$, $(S,U)=(1,2), (1,3)$. White squares correspond on the $[F,V]$ grid to combinatorial classes where, according to Steinitz \cite{Steinitz1, Steinitz2} we find polyhedra, on the $[S,U]$ grid the same squares correspond to equilibrium classes where we find polyhedra with $S=F, U=V$ \cite{balancing}. }
\label{fig:1}
\end{center}
\end{figure}

\subsection{Generalizations}\label{ss:gen}
The above problem has many generalizations of which we list a few below and we also briefly summarize the related results:

\begin{enumerate}

\item Instead of looking at homogeneous polyhedra, one may look at \emph{$h$-skeletons} which are polyhedra with mass uniformly distributed on their $h$-dimensional faces (see Figure \ref{fig:material}). If $h=3$ then we have the original problem (homogeneous polyhedron). A 0-skeleton is a homogeneous point set, with unit mass at each point (i.e. a polyhedron with unit masses at the vertices). We will denote the minimal facet and vertex numbers for mono-stable $h$-skeletons by $F^S_h, V^S_h$, respectively. In case of simplices, the center of mass of a homogeneous body and a 0-skeleton coincide \cite{KrantzMcCarthyParks} , so Conway's result for the tetrahedron implies that for 0-skeletons we also have  $F^S_0, V^S_0 \geq 5$ and upper bounds are not known. For simplicity and consistency of notation, in the homogeneous case we will drop the lower index and use $F^S,S^S$, as before.

\begin{figure}[ht]
\begin{center}
\includegraphics[width=0.8\columnwidth]{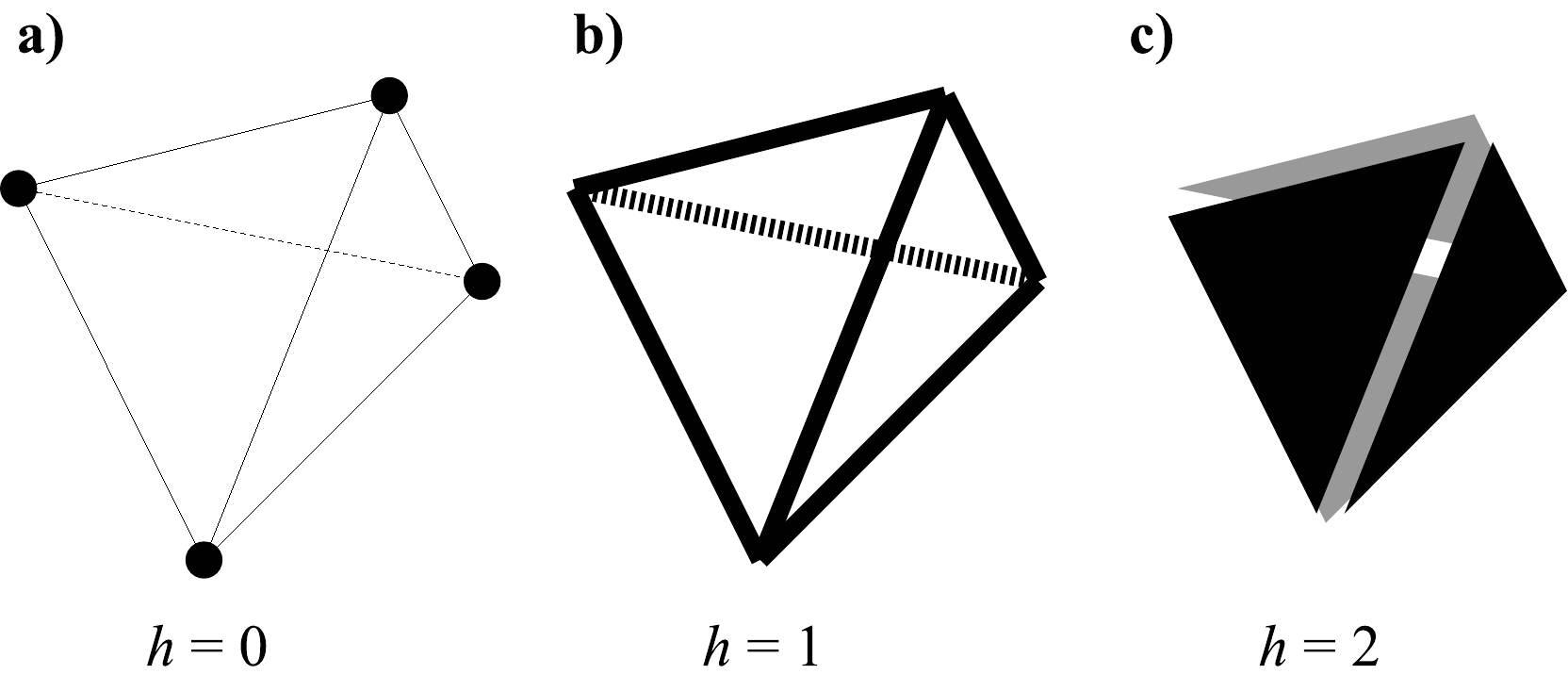}
\caption{Illustration of homogeneous material distributions for polyhedra: (a) 0-skeleton, (b) 1-skeleton, (c) 2-skeleton. The case $h=3$, i.e., uniform mass distribution over the entire volume is not shown for simple graphical reasons.}
\label{fig:material}
\end{center}
\end{figure}

\item Instead of looking at mono-stable polyhedra one may look at \emph{mono-unstable} polyhedra, i.e. for polyhedra which have only one vertex on which they can be balanced in an (unstable) equilibrium position. For the homogeneous case, such polyhedra were first exhibited in \cite{balancing} with an example at $(F,V)=(18,18).$  In the same paper, Conway's result for the nonexistence of a mono-stable tetrahedron was extended to the nonexistence of a mono-unstable polyhedron, so for the minimal numbers $F^U, V^U$ for the faces and vertices of a mono-unstable polyhedron we have $5 \leq F^U, V^U \leq 18$ and for 0-skeletons we have  $5 \leq F^U, V^U$.

\item Instead of looking at  polyhedra one may look at polytopes in higher dimensions. Dawson and Finbow \cite{Dawson, Dawson2, Dawson3} published a series of papers where they proved that in $d\geq 10$ dimensions mono-stable simplices exist, thus they proved for $d\geq 10$ that $F^S= F^S_0=d+1$ and they also proved that for $d<9$ we have $F^S,F^S_0 \geq d+2$

\item Instead of looking at mono-stable or mono-unstable objects (collectively called  monostatic) one may look at \emph{mono-monostatic} objects which have just one stable and one unstable equilibrium point. Among piecewise smooth 
convex bodies the first such object, called G\"omb\"oc, was identified in 2007 \cite{VarkonyiDomokos}. Little is known about the
minimal number of faces and and vertices for a polyhedron in this class. These lower bounds  we will denote by $F^*_h, V^*_h$. Obviously, we have

\begin{equation}\label{e:1}
F^*_h \geq F^S_h, F^U_h, \quad V^*_h \geq V^S_h, V^U_h,
\end{equation}
however, despite a related prize \cite{balancing}, there are no other bounds known for either $F^*_h$ or $V^*_h$.

\end{enumerate}

\subsection{Main result, strategy of the proof and outline of the paper}
Our goal is to prove the statement claimed in the title of the paper.  In fact, we will prove a stronger claim which, using the current notation can be stated as
\begin{thm}\label{thm:7}
$V^U_0 \geq 8$.
\end{thm}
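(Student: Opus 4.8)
The plan is to reduce the statement $V^U_0 \geq 8$ to a finite collection of polynomial feasibility problems, one for each combinatorial type of polyhedron with $V \leq 7$ vertices, and then certify infeasibility of each. First I would recall that for a $0$-skeleton the center of mass is simply the arithmetic mean $c = \frac{1}{V}\sum_{i=1}^{V} p_i$ of the vertices $p_1,\dots,p_V \in \Re^3$. A vertex $p_i$ is an unstable equilibrium of the polyhedron $P = \conv\{p_1,\dots,p_V\}$ exactly when the orthogonal projection of $c$ onto the supporting cone at $p_i$ equals $p_i$ itself, equivalently when $c - p_i$ lies in the (interior of the) normal cone of $P$ at $p_i$; concretely, writing the edges at $p_i$ as $p_j - p_i$ for $j$ in the neighbor set $N(i)$, this says $\langle c - p_i,\, p_j - p_i\rangle < 0$ for all $j \in N(i)$. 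So ``$p_i$ is not unstable'' becomes the existence of some $j \in N(i)$ with $\langle c - p_i,\, p_j - p_i\rangle \geq 0$. The polyhedron is mono-unstable with unstable point $p_1$ iff the strict system holds at $i=1$ and for every $i \geq 2$ at least one non-strict inequality holds.

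Next I would enumerate: by Steinitz's theorem the combinatorial types of $3$-polytopes with at most $7$ vertices form a finite, explicitly known list (there is exactly one with $4$, one with $5$, two with $6$, and five with $7$ vertices, or the analogous small count), and by symmetry of the mono-unstable condition we may fix which vertex is the putative unstable one and which vertex (the one realizing the ``escape'' inequality) is paired with it. For each such combinatorial type, each choice of unstable vertex $p_1$, and each choice of a ``witness neighbor'' $j(i) \in N(i)$ for every other vertex $i$, we obtain a semialgebraic set in the $3V$ coordinate variables (minus the $6$ we can normalize away by a rigid motion, e.g. placing $p_1$ at the origin and fixing a frame). The claim $V^U_0 \geq 8$ is equivalent to the assertion that \emph{every} one of these finitely many semialgebraic sets is empty. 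To certify emptiness I would use a Positivstellensatz-type argument: exhibit, for each system, an explicit polynomial identity expressing a contradiction (a sum of products of the defining inequalities and squares that equals a negative constant), which is exactly what a numerical SDP-based search for such certificates — then rationalized and verified exactly — produces. This is the algorithm alluded to in the abstract.

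The main obstacle, and the place where real work is needed, is the last step: the raw polynomial systems are of moderate degree but involve many variables (up to roughly $15$ after normalization for $V=7$) and many inequality constraints, so a direct Positivstellensatz certificate at low degree may not exist or may be out of reach of the SDP solver. I expect one has to be cleverer — e.g. eliminate the coordinates of $c$ symbolically, exploit the combinatorial structure to split each system into lower-dimensional subcases (branching on the signs of a few key inner products), and possibly use the fact that for the unstable vertex $p_1$ the strict inequalities $\langle c - p_1, p_j - p_1\rangle < 0$ force $p_1$ to be ``far'' from $c$ in a quantitative sense that contradicts $c$ being the average of points one of which is $p_1$. Indeed a promising clean sub-argument: summing $\langle c - p_1, p_j - p_1\rangle < 0$ over $j \in N(1)$ and combining with $\sum_i (p_i - c) = 0$ gives a linear-algebraic restriction that already kills some types outright; the residual types are the ones requiring the full semialgebraic machinery. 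Handling all residual cases uniformly — and producing human-checkable or at least independently-verifiable infeasibility certificates rather than mere numerical output — is the crux.

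Finally, the implication $F \geq 6$ follows from $V \geq 8$ by Euler's formula together with the fact that a polyhedron with $V=8$ vertices and $F \leq 5$ faces is impossible (each face has $\geq 3$ edges, each edge lies on $2$ faces, so $2E \geq 3F$, and $V - E + F = 2$ forces $F \geq \lceil (V+4)/2 \rceil$ when combined with $E \geq \frac{3}{2}F$ and $E \geq \frac{3}{2}V$), so I would include that short combinatorial remark at the end.
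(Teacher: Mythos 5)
Your proposal stops exactly where the theorem lives. Reducing $V^U_0\geq 8$ to finitely many polynomial infeasibility problems is also the paper's strategy, but the actual content of the theorem is the exhibited, exactly verified infeasibility certificate for \emph{every} resulting system with $V=5,6,7$; you only promise to search for Positivstellensatz/SOS certificates via SDP and you yourself concede that such certificates ``may not exist or may be out of reach'' and that handling all residual cases ``is the crux.'' As written, nothing is proved. The paper closes this gap with a far simpler certificate family: after eliminating $\mathbf{r}_V$ through the centroid equations and fixing four coordinates, each system consists of quadratics $\sum_k r_{i,k}^2-\sum_k r_{i,k}r_{j,k}\leq 0$, and one exhibits positive integers $c_2,\dots,c_V$ such that the weighted sum $f$ of these left-hand sides is a convex quadratic (its Hessian depends only on the $c_i$) with a strictly positive, exactly computed rational minimum; this positive linear combination is already a contradiction with solvability, no SOS machinery or branching needed, and the explicit $c_i$ and minima for all $(V-1)!$ systems up to $V=7$ are tabulated in the appendices. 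Without an analogue of those verified data your text is a plan, not a proof.

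The reduction itself also has avoidable defects. First, a sign error: an unstable equilibrium at $p_i$ requires $p_i-c$ (not $c-p_i$) to lie in the outward normal cone, i.e. $\langle c-p_i,\,p_j-p_i\rangle>0$ for all $j$; since $c\in P$, the vector $c-p_i$ can never lie in that cone, so your inequalities are reversed. Second, conditioning on Steinitz combinatorial types and neighbor sets $N(i)$ is unnecessary (and your counts are wrong: there are $2$, $7$ and $34$ types with $5$, $6$ and $7$ vertices, not $1$, $2$ and $5$). Because \emph{any} vertex, not only a neighbor, can witness that $p_i$ fails to be an equilibrium, the paper's shadowing criterion $(\mathbf{r}_i-\mathbf{r}_j)\mathbf{r}_i<0$ uses no combinatorial data at all; moreover shadowing forces $|\mathbf{r}_i|<|\mathbf{r}_j|$, so labelling vertices by decreasing distance from the centroid restricts witnesses to $j<i$ and leaves exactly $(V-1)!$ purely quadratic systems. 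Your type-plus-witness enumeration is logically admissible as a relaxation, but it is much larger and buys nothing. (Your closing Euler-formula remark giving $F\geq 6$ from $V\geq 8$ is fine but peripheral to the statement.)
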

Using (\ref{e:1}), it is immediately clear that from Theorem \ref{thm:7} we have
\begin{lem}\label{lem:main}
    $V^*_0 \geq 8$
\end{lem}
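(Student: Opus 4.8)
The plan is to obtain Lemma \ref{lem:main} as an immediate corollary of Theorem \ref{thm:7}, with essentially no new work: all the difficulty sits in Theorem \ref{thm:7} itself. The only ingredient beyond that theorem is the trivial inclusion of equilibrium classes already encoded in \eqref{e:1}. First I would recall the relevant definitions: a \emph{mono-monostatic} $0$-skeleton is a convex point set with unit masses at the points that has exactly one stable and exactly one unstable equilibrium point, whereas a \emph{mono-unstable} $0$-skeleton is one that has exactly one unstable equilibrium (with no constraint on the number of stable ones). Consequently every mono-monostatic $0$-skeleton is, in particular, a mono-unstable $0$-skeleton.

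Next I would pass to vertex-minimisation over these classes. Since the family of mono-monostatic $0$-skeletons is contained in the family of mono-unstable $0$-skeletons, the minimal vertex number over the former is at least the minimal vertex number over the latter, i.e. $V^*_0 \geq V^U_0$; this is precisely the $h=0$, vertex instance of \eqref{e:1}. Chaining this with the bound $V^U_0 \geq 8$ supplied by Theorem \ref{thm:7} yields $V^*_0 \geq V^U_0 \geq 8$, which is the assertion of the lemma.

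At the level of this lemma there is no real obstacle: the genuine content — the algorithmic, polynomial-inequality argument showing that no mono-unstable $0$-skeleton can have seven or fewer vertices — is exactly what must be established for Theorem \ref{thm:7}, and that is where all the effort is spent. For orientation I would also note that the complementary estimate $V^*_0 \geq V^S_0 \geq 5$ follows from Conway's nonexistence result for the mono-stable tetrahedron, but since $5 < 8$ it is subsumed by the bound above and does not sharpen the conclusion.
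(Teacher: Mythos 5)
Your argument is exactly the paper's: Lemma \ref{lem:main} is deduced immediately from Theorem \ref{thm:7} via the inclusion of mono-monostatic $0$-skeletons among mono-unstable ones, i.e. the $h=0$ vertex case of \eqref{e:1}, giving $V^*_0 \geq V^U_0 \geq 8$. The proposal is correct and takes the same route, with all substantive work correctly deferred to Theorem \ref{thm:7}.
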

which is the statement claimed in the title of the paper. We will  prove Theorem \ref{thm:7} in four steps:

\begin{enumerate}
    \item In section \ref{s:main} we prove Theorem \ref{thm:main} which connects the vertex geometry (i.e. the collection of the $V$ vectors pointing from the center of mass to the vertices) of a polytope,
     to the number $U$  of its unstable equilibria. (We will also introduce the dual statement for faces and stable equilibria but we will not use the latter in the current paper for computations.)
     
     \item In section \ref{s:alg} we show that if $U=1$ then Theorem \ref{thm:main} can be converted into a system of 
     quadratic inequalities and we construct the general system for $d=3$-dimensional 0-skeletons. The Theorem
     also admits to develop such systems for other cases, such as homogeneous polyhedra, or polytopes in other dimensions, however, in the current paper we only investigate 3 dimensional 0-skeletons.
     
     \item In section \ref{sec:ex} we develop and solve this system for the tetrahedron, serving as an illustrative example of our method. Here we prove that $V^0 \geq 5$ in a manner which is independent of the original proof of Conway.

     \item In section \ref{sec:No-mono} we develop an optimization algorithm which can prove the unsolvability of  the system of quadratic inequalities constructed in Section \ref{s:alg} by doing exact (rational) arithmetic and by using this code we prove Theorem \ref{thm:7}. 
\end{enumerate}

The same algorithm could, in principle, also \emph{solve} such a system (instead of proving its unsolvability). If the solution for $V=\bar V$ was convex and all systems with $V< \bar V$ were proven to be unsolvable then, in this case the algorithm would provide the \emph{exact} value as $V^U_0=\bar V$.  Since we did not find such a solution, Theorem \ref{thm:7} provides a lower bound for $V^U_0$. Analogous computations for other mass distributions (e.g. the homogeneous case) and other dimensions are, in principle, possible but have not been performed in this paper.
Section \ref{sec:con} discusses why the dual case (i.e. the algorithm to compute $F^S$) is not pursued in the current paper and we establish the link between our formulae and those of Dawson \cite{Dawson}. We also draw some conclusions.

\section{The geometric construction}\label{s:main}

\subsection{Problem statement}\label{ss:problemstatement}
Let $P$ be a $d$-dimensional convex polytope with $F$ faces $f_i$,
$(i=1,2, \dots F)$ and $V$ vertices $v_i$, $(i=1,2, \dots V)$.

We identify the faces by the \emph{face vectors} $\mathbf{q}_i$
orthogonal to face $f_i$ with $|\mathbf{q}_i|$ being equal to the orthogonal distance
of the face $f_i$ from the center of mass $o$.

The vertices are identified by \emph{vertex vectors} $\mathbf{r}_i$, ($i=1,2, \dots V$) with origin at the center of mass $o$ and we assume $P$ to be generic in the sense that if $i \not=j$ then $|\mathbf{r}_i| \not = |\mathbf{r}_j|$.

\begin{defn}\label{def:unstable}
We say that $p_i$ is an (unstable) static equilibrium point of $P$ if the support plane $Z$ of $P$ at $p_i$ with respect to $o$ does not intersect $P$. More precisely: we speak about a \emph{nondegenerate} equilibrium point $p_i$ if $Z \cup P = p_i$, and the equilibrium is degenerate if $Z$ also contains some edges of $P$. The term `unstable equilibrium' refers to a nondegenerate one henceforth unless stated otherwise.
We denote the number of unstable static equilibrium points of $P$ by $U(P)$ and we have $U(P) \leq V$.
\end{defn}

\noindent Our main result connects the vertex geometry (i.e. the set of position vectors $\mathbf{r}_i$) of $P$ with
the number $U(P)$ of its unstable static equilibria via the following formula:

\begin{thm}\label{thm:main} Let $P$ be a convex polytope. Then
\begin{equation}\label{main}
    U(P)=\sum_{i=1}^V  \left \lfloor \frac{1}{2}+ \frac{\sum_{j=1}^V \sign\left((\mathbf{r}_i-\mathbf{r}_j)\mathbf{r}_i\right)}{2(V-1)}\right \rfloor.
\end{equation}
\end{thm}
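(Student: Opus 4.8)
The strategy is to show, separately for each vertex $v_i$, that the $i$-th summand on the right-hand side of (\ref{main}) equals $1$ precisely when $v_i$ is a nondegenerate unstable equilibrium point of $P$ and $0$ otherwise; summing over $i$ then gives $U(P)$ by Definition~\ref{def:unstable}.

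\textbf{Geometric step.} Unwinding Definition~\ref{def:unstable}, the support plane relevant to balancing $P$ on the vertex $v_i$ is $Z_i := \{x : (x - v_i)\cdot\mathbf{r}_i = 0\}$, the hyperplane through $v_i$ normal to $\mathbf{r}_i = v_i - o$ (balancing on $v_i$ places $o$ on the vertical line through $v_i$). I would then prove that $v_i$ is a nondegenerate unstable equilibrium if and only if
\[
(\mathbf{r}_i - \mathbf{r}_j)\cdot\mathbf{r}_i > 0 \qquad\text{for every } j \ne i .
\]
Since $(o - v_i)\cdot\mathbf{r}_i = -|\mathbf{r}_i|^2 < 0$, the center of mass $o$ lies in the open halfspace $H^- = \{x : (x - v_i)\cdot\mathbf{r}_i < 0\}$. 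If $Z_i \cap P = \{v_i\}$, then, $P$ being convex and $v_i$ an extreme point of $P$, the whole polytope lies in one closed halfspace bounded by $Z_i$; since $o \in P \cap H^-$, that halfspace is $\overline{H^-}$, and so every vertex $v_j$ with $j \ne i$ lies in $H^-$, which is the displayed system. Conversely, if the system holds then each $v_j$ ($j \ne i$) lies in $H^-$, so any $x = \sum_j \lambda_j v_j \in P$ satisfies $(x - v_i)\cdot\mathbf{r}_i = \sum_j \lambda_j (\mathbf{r}_j - \mathbf{r}_i)\cdot\mathbf{r}_i \le 0$, with equality only when $\lambda_i = 1$; hence $Z_i \cap P = \{v_i\}$.

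\textbf{Counting step.} Write $S_i := \sum_{j=1}^V \sign\!\big((\mathbf{r}_i - \mathbf{r}_j)\cdot\mathbf{r}_i\big)$. The term $j = i$ vanishes, and by the genericity assumption no other vertex lies on $Z_i$, so the remaining $V-1$ terms are each $\pm 1$ and $S_i = (V-1) - 2k_i$, where $k_i := \#\{\, j \ne i : (\mathbf{r}_i - \mathbf{r}_j)\cdot\mathbf{r}_i < 0 \,\}$. By the geometric step, $v_i$ is a nondegenerate unstable equilibrium iff $k_i = 0$, i.e. $S_i = V-1$, in which case $\tfrac12 + \tfrac{S_i}{2(V-1)} = 1$ and the summand is $1$; otherwise $k_i \ge 1$, so $S_i \le V - 3$ and $0 \le \tfrac12 + \tfrac{S_i}{2(V-1)} \le \tfrac12 + \tfrac{V-3}{2(V-1)} = \tfrac{V-2}{V-1} < 1$, so the summand is $0$. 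Summing over $i = 1, \dots, V$ then yields (\ref{main}).

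\textbf{Expected obstacle.} The only genuinely delicate point is the genericity hypothesis: one must check that $|\mathbf{r}_i| \ne |\mathbf{r}_j|$ for $i \ne j$ really does (or can be harmlessly strengthened so as to) exclude a vertex $v_j$ lying on the plane $Z_i$, since only then is each $\sign$ term with $j \ne i$ exactly $\pm 1$. Even when such a degenerate configuration occurs the formula survives — a vanishing term gives $S_i \le V - 2$, and $\tfrac12 + \tfrac{V-2}{2(V-1)} = \tfrac{2V-3}{2(V-1)} < 1$, so a degenerate vertex equilibrium contributes $0$, consistent with $U(P)$ counting only nondegenerate equilibria — but this case deserves an explicit remark. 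A second, routine point underlying the geometric step is that $o \in \inter P$; this holds for the $0$-skeleton (where $o$ is the vertex centroid) and, more generally, for any mass distribution carried by $P$, and it is what forces the sign in the argument above.
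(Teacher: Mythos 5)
Your proof is correct and takes essentially the same route as the paper: the paper's argument rests on the same shadowing criterion (vertex $p_i$ is an unstable equilibrium iff $(\mathbf{r}_i-\mathbf{r}_j)\mathbf{r}_i>0$ for all $j\neq i$, their Lemmas on shadowing) and then reads the floor expression as counting the rows of the shadowing matrix whose entries are all $+1$. If anything, your write-up is more complete, since you verify both directions of the geometric equivalence, check that the summand can never be negative, and note explicitly that vanishing sign terms (degenerate equilibria) still contribute $0$, points the paper passes over.
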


We also phrase an analogous, though more restricted result for stable equilibria.

\begin{defn}\label{def:stable}
We say that face $f_i$ carries a (nondegenerate) stable static equilibrium point of $P$ if the orthogonal projection $p_i$ of the center of mass $o$ onto the plane of $f_i$ is contained in the interior of $f_i$. The equilibrium is degenerate if $p_i \in \bd(f_i)$ but the term `stable equilibrium' refers to a nondegenerate one henceforth unless stated otherwise.
We denote the number of stable static equilibrium points of $P$ by $S(P)$ and we have $S(P) \leq F$.
\end{defn}

\begin{thm}\label{thm:dual}
Let $P$ be a  convex polytope {}. Then
\begin{equation}\label{main_dual}
    S(P)=\sum_{i=1}^F  \left \lfloor \frac{1}{2}+ \frac{\sum_{j=1}^F \sign\left((\mathbf{q}_j-\mathbf{q}_i)\mathbf{q}_j\right)}{2(F-1)}\right \rfloor.
\end{equation}
\end{thm}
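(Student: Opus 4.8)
The plan is to mirror the proof of Theorem \ref{thm:main}, exchanging the roles of vertices and faces, position vectors $\mathbf{r}_i$ and face vectors $\mathbf{q}_i$, and the notions of stable/unstable equilibrium. Recall that a face $f_i$ carries a stable equilibrium precisely when the foot $p_i$ of the perpendicular from $o$ to the plane of $f_i$ lies in the relative interior of $f_i$. I would first reformulate this containment condition purely in terms of the face vectors: the key observation is that $p_i = \mathbf{q}_i$ (viewing $o$ as the origin), and that $p_i$ fails to lie in $f_i$ exactly when there is some other face $f_j$ whose supporting halfspace, restricted to the plane $\aff(f_i)$, excludes $\mathbf{q}_i$. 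Writing out the halfspace inequality for $f_j$ evaluated at the point $\mathbf{q}_i$ and simplifying using $|\mathbf{q}_j|^2 = \mathbf{q}_j\cdot\mathbf{q}_j$, this exclusion should reduce to the sign condition $(\mathbf{q}_j - \mathbf{q}_i)\cdot \mathbf{q}_j > 0$ — i.e., $f_j$ "blocks" $f_i$ iff this inner product is positive. This is the dual of the blocking relation $(\mathbf{r}_i - \mathbf{r}_j)\cdot \mathbf{r}_i$ appearing in \eqref{main}.

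With the blocking relation in hand, the combinatorial core is identical to the vertex case. For a fixed $i$, the face $f_i$ carries a stable equilibrium iff \emph{no} $j \neq i$ blocks it, i.e. iff $\sign\left((\mathbf{q}_j - \mathbf{q}_i)\mathbf{q}_j\right) \le 0$ for all $j\neq i$ (and generically $<0$, by the genericity assumption that the $|\mathbf{q}_i|$ are distinct, which rules out the vanishing inner product and degenerate configurations). In that case the inner sum $\sum_{j=1}^F \sign\left((\mathbf{q}_j-\mathbf{q}_i)\mathbf{q}_j\right)$ equals $-(F-1)$ (the $j=i$ term contributes $0$), so $\tfrac12 + \tfrac{-(F-1)}{2(F-1)} = 0$ and the floor is $0$ — wait, that gives the wrong parity, so in fact one checks that a non-blocked face should contribute $1$: re-examining, when $f_i$ is \emph{not} blocked every term is $+1$ except the $j=i$ term, giving sum $F-1$, hence $\tfrac12 + \tfrac12 = 1$ and floor $1$; when $f_i$ \emph{is} blocked at least one term is $-1$, dropping the sum to at most $F-3$, so the argument of the floor is at most $\tfrac12 + \tfrac{F-3}{2(F-1)} < 1$ and also $\ge 0$, hence the floor is $0$. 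Summing over $i$ then counts exactly the faces carrying stable equilibria, which is $S(P)$.

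The one genuinely new point relative to Theorem \ref{thm:main}, and the step I expect to require the most care, is justifying that the relevant obstruction to $p_i \in \relint(f_i)$ can always be certified by a \emph{single} other facet $f_j$ via the scalar sign condition — in the vertex case the analogous fact is that a vertex fails to be an unstable equilibrium iff a single other vertex lies "beyond" its support plane, which is geometrically transparent; for faces one must argue that if $\mathbf{q}_i \notin f_i$ then, among the defining halfspaces $\{\mathbf{x}: \mathbf{q}_j\cdot\mathbf{x} \le |\mathbf{q}_j|^2\}$ of $P$, at least one with $j\neq i$ is violated at $\mathbf{x} = \mathbf{q}_i$. This follows because $\mathbf{q}_i$ lies in the plane of $f_i$ but outside the polytope's cross-section there, so it violates some facet inequality, and it cannot violate the $i$-th one (it satisfies it with equality); one should also confirm that genericity excludes the boundary case $\mathbf{q}_j\cdot\mathbf{q}_i = |\mathbf{q}_j|^2$ occurring simultaneously with non-membership, so that the $\sign$ is unambiguous. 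Once this dictionary is set up, the remainder is the same floor-function bookkeeping as in the proof of \eqref{main}, which I would simply cite rather than repeat.
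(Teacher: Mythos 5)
Your proposal follows the same route as the paper: a pairwise ``blocking/shadowing'' relation between faces expressed by the sign of $(\mathbf{q}_j-\mathbf{q}_i)\mathbf{q}_j$ (the paper's Lemma~\ref{lem:1_dual}), the observation that an unblocked face carries a stable equilibrium (Lemma~\ref{lem:2_dual}), and then the same floor-function row-sum bookkeeping. In fact your certification of blocking by evaluating the facet halfspace inequalities at the projection point $\mathbf{q}_i$ is a more explicit and arguably more rigorous version of the paper's one-line tipping argument, and your closing observation (that $\mathbf{q}_i$ lies on the hyperplane of $f_i$, so non-membership in $f_i$ forces the violation of some \emph{other} facet inequality) is exactly the point the paper leaves implicit.

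The one concrete flaw is the sign of your blocking criterion, which you never actually fix at the geometric level. Writing $P=\{\mathbf{x}:\ \mathbf{q}_j\mathbf{x}\le|\mathbf{q}_j|^2 \ \forall j\}$, the point $\mathbf{q}_i$ violates the $j$-th inequality precisely when $\mathbf{q}_j\mathbf{q}_i>|\mathbf{q}_j|^2$, i.e.\ when $(\mathbf{q}_j-\mathbf{q}_i)\mathbf{q}_j<0$ --- so $f_j$ blocks $f_i$ iff the inner product is \emph{negative}, exactly condition (\ref{scalar_dual}), not positive as you first state. Your ``wrong parity'' detour is the symptom of this slip: you then silently adopt the correct convention in the bookkeeping (unblocked $\Rightarrow$ all signs $+1$ $\Rightarrow$ summand $1$; blocked $\Rightarrow$ some sign $-1$ $\Rightarrow$ summand $0$), so the final count is right, but the stated criterion contradicts the counting that follows and should be corrected at the source. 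A small further remark: the genericity of the $|\mathbf{q}_i|$ is not what handles the boundary case $\mathbf{q}_j\mathbf{q}_i=|\mathbf{q}_j|^2$; such zero terms simply keep the floor at $0$, which is consistent with Definition~\ref{def:stable} counting only nondegenerate equilibria, so no genericity assumption is needed there.
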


Next we prove Theorems \ref{thm:main} and \ref{thm:dual}. In Subsection \ref{ss:dawson} we will show
that in case of homogeneous simplices the statement of Theorem \ref{thm:dual} follows from a result of Dawson \cite{Dawson}.

\subsection{Proof of Theorem \ref{thm:main}}

Let $p_i$ and $p_j$ denote two vertices and let $o$ denote the center of mass of the polytope $P$. It appears to be intuitively clear that, for any fixed value of $|\mathbf{r}_i|-|\mathbf{r}_j|$, if  the angle between the rays $op_1$ and $op_2$ is sufficiently small,  only one of the vertices $p_1,p_2$ may carry a static equilibrium. We express this observation by the following
\begin{defn}
Let $p_i, p_j$ be two vertices of $P$, with respective position vectors $\mathbf{r}_i, \mathbf{r}_j.$ We say that $p_i$ is \emph{in the shadow of} $p_j$  (or, alternatively, $p_j$ is \emph{shadowing} $p_i$) if the existence of $p_j$ prohibits $p_i$ to be a static equilibrium point.
\end{defn}
\noindent This intuition can be clarified by the following simple
\begin{lem}\label{lem:1}
Vertex $p_i$ is in the shadow of vertex $p_j$ if and only if
\begin{equation}\label{scalar}
(\mathbf{r}_i-\mathbf{r}_j)\mathbf{r}_i < 0.
\end{equation}
\end{lem}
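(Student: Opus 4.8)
The plan is to recall the definition of static equilibrium in terms of the support plane at the vertex $p_i$ and translate the geometric condition "$p_j$ prevents $p_i$ from being an equilibrium" into the stated scalar inequality. First I would fix the relevant geometry: by Definition \ref{def:unstable}, $p_i$ is an (unstable) static equilibrium of $P$ if and only if the plane $Z$ through $p_i$ orthogonal to $\mathbf{r}_i$ (the support plane "at $p_i$ with respect to $o$") does not intersect the interior of $P$; equivalently, $Z$ is a supporting hyperplane, i.e. every point $x$ of $P$ satisfies $(x - p_i)\mathbf{r}_i \le 0$ (with $o$ on the near side). So $p_i$ fails to be an equilibrium precisely when \emph{some} point of $P$ lies strictly on the far side of $Z$, i.e. when there exists $x \in P$ with $(x-p_i)\mathbf{r}_i > 0$; and since $P$ is the convex hull of its vertices, such an $x$ exists if and only if some vertex $p_k$ satisfies $(p_k - p_i)\mathbf{r}_i > 0$, i.e. $(\mathbf{r}_k - \mathbf{r}_i)\mathbf{r}_i > 0$, which rearranges to $(\mathbf{r}_i - \mathbf{r}_k)\mathbf{r}_i < 0$.

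Next I would match this to the pairwise "shadow" relation. The vertex $p_j$ individually prohibits $p_i$ from being an equilibrium exactly when $p_j$ itself lies on the far side of $Z$, i.e. when $(\mathbf{r}_j - \mathbf{r}_i)\mathbf{r}_i > 0$, equivalently $(\mathbf{r}_i - \mathbf{r}_j)\mathbf{r}_i < 0$, which is precisely \eqref{scalar}. Conversely, if \eqref{scalar} holds, then the hyperplane $Z$ strictly separates $p_j$ from $o$ on the far side, so $Z$ meets the segment $o p_j \subset P$, hence $Z$ intersects $\inter P$ and $p_i$ cannot be an equilibrium — so $p_j$ is shadowing $p_i$. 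This establishes the equivalence in both directions. It is worth recording the clean geometric reading: $(\mathbf{r}_i - \mathbf{r}_j)\mathbf{r}_i < 0$ says that the foot of the perpendicular from $o$ to the line $p_i p_j$ lies strictly on the $p_i$-side, beyond $p_i$; equivalently $|\mathbf{r}_i|^2 < \mathbf{r}_i \cdot \mathbf{r}_j$, so that $p_i$ is "closer to $o$" than $p_j$ in the direction $\mathbf{r}_i$.

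I expect the only subtle point — and the main thing to be careful about — is the handling of degenerate configurations and the strictness of the inequality: the genericity assumption $|\mathbf{r}_i| \neq |\mathbf{r}_j|$ for $i \neq j$ rules out the boundary case $(\mathbf{r}_i - \mathbf{r}_j)\mathbf{r}_i = 0$ occurring simultaneously with the symmetric condition, and ensures that "$p_j$ on $Z$" is a non-generic event one may treat as non-shadowing (or fold into the degenerate-equilibrium discussion of Definition \ref{def:unstable}). I would state explicitly that the argument uses only that $P = \conv\{p_1,\dots,p_V\}$ and that $o \in \inter P$ (true since $o$ is the center of mass of a full-dimensional polytope), so that the segment $o p_j$ genuinely lies in $P$; no further properties of $P$ are needed. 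Everything else is the elementary linear-algebra rearrangement above.
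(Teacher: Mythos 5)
Your proposal is correct and follows essentially the same route as the paper: the paper's proof is a one-line observation that condition \eqref{scalar} means the support plane at $p_i$ intersects $P$, so $p_i$ cannot be an equilibrium, with the converse direction left implicit in the (informal) definition of shadowing. You simply spell out both directions explicitly — placing $p_j$ on the far side of the support plane, and using the segment $o p_j$ to see that the plane meets the interior of $P$ — which is a careful elaboration of the same argument rather than a different method.
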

\begin{proof}
Condition (\ref{scalar}) implies that the support plane of $P$ at $p_i$ will intersect $P$, so $p_i$ can not be an equilibrium point.
\end{proof}
\begin{cor}\label{cor:p_sh_inwards} 
If vertex $p_j$ shadows vertex $p_i$, then
\begin{equation}\label{ri_lt_rj}
\left|\mathbf{r}_i \right| < \left|\mathbf{r}_j \right|.
\end{equation}
\end{cor}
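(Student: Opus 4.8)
The plan is to deduce this directly from Lemma \ref{lem:1} together with the Cauchy--Schwarz inequality. By hypothesis $p_j$ shadows $p_i$, so \eqref{scalar} holds: $(\mathbf{r}_i-\mathbf{r}_j)\mathbf{r}_i<0$. First I would expand the left-hand side as $|\mathbf{r}_i|^2-\mathbf{r}_i\mathbf{r}_j$, so that the shadowing condition becomes $|\mathbf{r}_i|^2<\mathbf{r}_i\mathbf{r}_j$.

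Next I would bound the right-hand side from above: by Cauchy--Schwarz, $\mathbf{r}_i\mathbf{r}_j\le|\mathbf{r}_i|\,|\mathbf{r}_j|$, hence $|\mathbf{r}_i|^2<|\mathbf{r}_i|\,|\mathbf{r}_j|$. Since $o$ is the center of mass of the full-dimensional polytope $P$, it lies in the interior of $P$, so $o\neq p_i$ and $|\mathbf{r}_i|>0$; dividing the inequality by $|\mathbf{r}_i|$ yields $|\mathbf{r}_i|<|\mathbf{r}_j|$, which is \eqref{ri_lt_rj}.

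There is essentially no obstacle here: the only point that requires a word of care is ruling out $\mathbf{r}_i=\mathbf{0}$ (so that the division is legitimate), which follows from $o\in\inter P$, and observing that the Cauchy--Schwarz step is all that is needed even though the chain of inequalities is strict at the first step. The corollary is therefore a short consequence of Lemma \ref{lem:1}, and it records the intuitive fact that a shadowing vertex must be strictly farther from the center of mass than the vertex it shadows — consistent with the informal discussion preceding Lemma \ref{lem:1}.
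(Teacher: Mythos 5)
Your proof is correct, and it starts exactly where the paper does: Lemma \ref{lem:1} expanded to $|\mathbf{r}_i|^2<\mathbf{r}_i\mathbf{r}_j$. The final step, however, is genuinely different. You invoke Cauchy--Schwarz, $\mathbf{r}_i\mathbf{r}_j\le|\mathbf{r}_i|\,|\mathbf{r}_j|$, and then divide by $|\mathbf{r}_i|$, which forces you to rule out $\mathbf{r}_i=\mathbf{0}$; you do this via $o\in\inter P$, which is a valid (if slightly heavier) justification, though in fact it is automatic from the hypothesis itself: if $\mathbf{r}_i=\mathbf{0}$ then $(\mathbf{r}_i-\mathbf{r}_j)\mathbf{r}_i=0$, contradicting the strict inequality \eqref{scalar}, so no geometric input about the center of mass is needed. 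The paper instead combines the shadowing inequality with the elementary fact $0<\left(\mathbf{r}_i-\mathbf{r}_j\right)^2=\mathbf{r}_i^2-2\mathbf{r}_i\mathbf{r}_j+\mathbf{r}_j^2$, replacing $-2\mathbf{r}_i\mathbf{r}_j$ by $-2\mathbf{r}_i^2$ to get $0<\mathbf{r}_j^2-\mathbf{r}_i^2$ directly; this avoids both Cauchy--Schwarz and any division, so the nonvanishing of $\mathbf{r}_i$ never arises. Both arguments are one-liners; the paper's is marginally more self-contained, while yours is equally rigorous once the division is justified, and your closing observation correctly captures the geometric content of the corollary.
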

\begin{proof}
By Lemma~\ref{lem:1},
$\mathbf{r}_i^2 < \mathbf{r}_i \mathbf{r}_j,$
but
$0<\left(\mathbf{r}_i - \mathbf{r}_j \right)^2 =
\mathbf{r}_i^2 - 2\mathbf{r}_i\mathbf{r}_j + \mathbf{r}_j^2 <
-\mathbf{r}_i^2 + \mathbf{r}_j^2.
$
\end{proof}
\begin{lem}\label{lem:2}
If the vertex $p_i$ is not in the shadow of any other vertex $p_j$ $(j \not= i)$ then $P$ has an unstable equilibrium point at $p_i$.
\end{lem}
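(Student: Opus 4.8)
The plan is to establish the contrapositive-friendly direction directly: if no vertex $p_j$ shadows $p_i$, then the support plane $Z$ of $P$ at $p_i$ (orthogonal to $\mathbf{r}_i$ through the point $p_i$) does not intersect the interior of $P$, hence $p_i$ is a nondegenerate unstable equilibrium in the sense of Definition \ref{def:unstable}. First I would write down what it means for $Z$ to avoid $\inter P$: since $P$ is the convex hull of its vertices, $Z$ separates $p_i$ from the rest of $P$ (strictly, on the open side) if and only if every other vertex $p_j$ lies strictly on the $o$-side of $Z$. The plane $Z=\{x : \mathbf{r}_i\cdot x = |\mathbf{r}_i|^2\}$ has $o$ on the side where $\mathbf{r}_i\cdot x < |\mathbf{r}_i|^2$; so the required condition is exactly $\mathbf{r}_i\cdot \mathbf{r}_j < |\mathbf{r}_i|^2$ for all $j\neq i$, i.e. $(\mathbf{r}_j - \mathbf{r}_i)\cdot\mathbf{r}_i < 0$, i.e. $(\mathbf{r}_i - \mathbf{r}_j)\cdot\mathbf{r}_i > 0$ for every $j\neq i$. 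By Lemma \ref{lem:1}, the hypothesis that $p_i$ is in the shadow of no other vertex says precisely that $(\mathbf{r}_i-\mathbf{r}_j)\cdot\mathbf{r}_i \geq 0$ for all $j\neq i$; combined with the genericity assumption $|\mathbf{r}_i|\neq|\mathbf{r}_j|$ (which, via the computation in Corollary \ref{cor:p_sh_inwards}, rules out the borderline equality $(\mathbf{r}_i-\mathbf{r}_j)\cdot\mathbf{r}_i = 0$ when $\mathbf{r}_i\neq\mathbf{r}_j$), we get the strict inequality for all $j\neq i$.

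Next I would assemble these facts: every vertex other than $p_i$ lies in the open halfspace $\{x : \mathbf{r}_i\cdot x < |\mathbf{r}_i|^2\}$, and $p_i$ itself satisfies $\mathbf{r}_i\cdot p_i = |\mathbf{r}_i|^2$ (taking $o$ as origin so $p_i$ has position vector $\mathbf{r}_i$). Hence the affine function $x\mapsto \mathbf{r}_i\cdot x$ attains its maximum over $P$ uniquely at the single point $p_i$, so $Z$ is a supporting hyperplane meeting $P$ only in $\{p_i\}$. By Definition \ref{def:unstable} this is exactly a nondegenerate unstable static equilibrium point, and it does not intersect $P$ elsewhere, so $p_i$ contributes to $U(P)$.

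There is essentially no computational obstacle here; the one point that needs care is the handling of the degenerate/boundary case. The literal negation of "in the shadow of some vertex" is the non-strict inequality $(\mathbf{r}_i-\mathbf{r}_j)\cdot\mathbf{r}_i \geq 0$, whereas a genuine (nondegenerate) equilibrium requires the strict inequality so that the support plane touches $P$ only at $p_i$. The argument therefore must invoke the genericity hypothesis $|\mathbf{r}_i|\neq|\mathbf{r}_j|$ to exclude equality: if $(\mathbf{r}_i-\mathbf{r}_j)\cdot\mathbf{r}_i = 0$ with $\mathbf{r}_i\neq\mathbf{r}_j$, the identity $0 < (\mathbf{r}_i-\mathbf{r}_j)^2 = \mathbf{r}_i^2 - 2\mathbf{r}_i\mathbf{r}_j + \mathbf{r}_j^2 = -\mathbf{r}_i^2 + \mathbf{r}_j^2$ (as in Corollary \ref{cor:p_sh_inwards}) forces $|\mathbf{r}_i| < |\mathbf{r}_j|$ strictly, so no equality case survives among distinct generic vertices. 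Thus the main "difficulty" is bookkeeping about strict versus non-strict inequalities and explicitly citing genericity; the geometry itself is the elementary statement that a supporting hyperplane which strictly separates one vertex from all the others meets the polytope in exactly that vertex.
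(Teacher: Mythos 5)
Your core argument is the same as the paper's: non-shadowing, read through Lemma \ref{lem:1}, places every other vertex in the half-space $\{x:\mathbf{r}_i\cdot x\le|\mathbf{r}_i|^2\}$ bounded by the support plane at $p_i$, and since $P$ is the convex hull of its vertices that plane supports $P$ at $p_i$; this is exactly the paper's two-sentence half-space proof written out in coordinates, and in the strict case your reasoning is complete.

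The one step you add beyond the paper --- excluding the borderline case $(\mathbf{r}_i-\mathbf{r}_j)\cdot\mathbf{r}_i=0$ by appealing to genericity --- does not work as argued. From that equality the computation of Corollary \ref{cor:p_sh_inwards} yields $|\mathbf{r}_j|>|\mathbf{r}_i|$, which is perfectly compatible with the standing assumption $|\mathbf{r}_i|\ne|\mathbf{r}_j|$, so genericity of the norms does not rule equality out. Concretely, $\mathbf{r}_i=(1,0,0)$ and $\mathbf{r}_j=(1,1,0)$ give $(\mathbf{r}_i-\mathbf{r}_j)\cdot\mathbf{r}_i=0$ with distinct norms: here $p_j$ does not shadow $p_i$ in the sense of Lemma \ref{lem:1}, yet $p_j$ lies on the support plane at $p_i$, so that plane meets $P$ in more than the single point $p_i$ and the equilibrium is degenerate in the sense of Definition \ref{def:unstable}. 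Hence the hypothesis ``not in the shadow of any other vertex,'' taken literally with the strict inequality of Lemma \ref{lem:1}, only guarantees a possibly degenerate equilibrium; nondegeneracy needs the strict inequalities $(\mathbf{r}_i-\mathbf{r}_j)\cdot\mathbf{r}_i>0$, and these do not follow from distinctness of the norms. To be fair, the paper's own proof glosses over exactly the same boundary configuration (it tacitly reads ``contained in the same half-space'' as strict containment), so your proposal is on par with the paper except that the explicit justification you offer for nondegeneracy is incorrect and should be dropped or replaced (e.g.\ by an explicit strictness hypothesis or a remark that the degenerate case is excluded separately).
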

\begin{proof}
The support plane at $p_i$ defines two half-spaces.
If the vertex $p_i$ is not in the shadow of any other vertex then
the center of mass $o$ and all other vertices will be contained in the same half-space. This implies that the support plane of $P$ at $p_i$ does not intersect $P$, so $p_i$ is an unstable equilibrium point of $P$.
\end{proof}
\begin{rem}
There are three possible shadowing relationships between two vertices $p_i$ and $p_j$:
\begin{enumerate}
    \item Vertex $p_i$ is shadowing vertex $p_j$.
    \item Vertex $p_j$ is shadowing vertex $p_i$.
    \item Vertex $p_i$ and vertex $p_j$ are not shadowing each other.
\end{enumerate}
\end{rem}
\begin{defn}\label{def:shadowmatrix}
The elements $s_{i,j}$ of the $V \times V$ \emph{vertex shadowing matrix} $\mathbf{S}(P)$
are defined as follows: If $i\not=j$ then $s_{i,j}=-1$ if $p_j$ is shadowing $p_i$ and $s_{i,j}=1$ if $p_j$ is not shadowing $p_i$.
If $i=j$ then $s_{i,i}=0$. Based on Lemma \ref{lem:1} we have
\begin{equation}
    s_{i,j}=\sign \left((\mathbf{r}_i-\mathbf{r}_j)\mathbf{r}_i\right).
\end{equation}
\end{defn}

Now we conclude the proof of Theorem \ref{thm:main}.  
\begin{proof} Based on Definition \ref{def:shadowmatrix},
for the absolute value of the row-sums of the shadowing matrix  $\mathbf{S}(P)$ we can write
\begin{equation}
    \left |\sum_{j=1}^{V}s_{i,j}\right | \leq V-1
\end{equation}
with equality if and only if $p_i$ is an unstable static equilibrium point of $P$. According to the formula (\ref{main}) of Theorem \ref{thm:main} in this case we add one to the value of $U(P)$.
\end{proof}

\subsection{Proof of Theorem \ref{thm:dual}}

\begin{defn}
Let $f_i, f_j$ be two faces of $P$, with respective face vectors $\mathbf{q}_i, \mathbf{q}_j$ and let $o_i$ denote the projection of the center of mass $o$ onto the plane of the face $f_i$. We say that $f_i$ is \emph{in the shadow of} $f_j$  (or, alternatively, $f_j$ is \emph{shadowing} $f_i$) if the existence of $f_j$ prohibits $f_i$ to be a static equilibrium point.
\end{defn}
\begin{lem}\label{lem:1_dual}
Face $f_i$ is in the shadow of face $f_j$ if and only if

\begin{equation}\label{scalar_dual}
(\mathbf{q}_j-\mathbf{q}_i)\mathbf{q}_j < 0.
\end{equation}
\end{lem}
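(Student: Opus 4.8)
The plan is to mimic, \emph{mutatis mutandis}, the proof of Lemma~\ref{lem:1}, with the support plane at a vertex replaced by the plane of a face, and the vertex itself replaced by the orthogonal projection of $o$ onto that plane. Throughout I adopt the orientation convention that $\mathbf{q}_i$ points from the center of mass $o$ towards the plane $H_i$ of $f_i$, so that $H_i=\{x:\mathbf{q}_i\cdot(x-o)=|\mathbf{q}_i|^2\}$, the point $o_i:=o+\mathbf{q}_i$ is the foot of the perpendicular from $o$ to $H_i$, and, by convexity, $P$ lies in the closed half-space $H_i^{-}:=\{x:\mathbf{q}_i\cdot(x-o)\le|\mathbf{q}_i|^2\}$. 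By Definition~\ref{def:stable}, $f_i$ carries a (nondegenerate) stable equilibrium precisely when $o_i\in\relint(f_i)$.

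First I would translate inequality~(\ref{scalar_dual}) into a statement about the location of $o_i$. Expanding,
\[
(\mathbf{q}_j-\mathbf{q}_i)\cdot\mathbf{q}_j=|\mathbf{q}_j|^2-\mathbf{q}_i\cdot\mathbf{q}_j=|\mathbf{q}_j|^2-\mathbf{q}_j\cdot(o_i-o),
\]
so (\ref{scalar_dual}) holds if and only if $\mathbf{q}_j\cdot(o_i-o)>|\mathbf{q}_j|^2$, i.e. if and only if $o_i$ lies strictly on the far side of the face plane $H_j$, away from $P$ (equivalently, $o_i\notin H_j^{-}$). Since $f_i\subseteq P\subseteq H_j^{-}$, this forces $o_i\notin f_i$, hence $o_i\notin\relint(f_i)$, so $f_i$ cannot carry a stable equilibrium. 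Thus the presence of the face $f_j$ --- which is what confines $P$ to $H_j^{-}$ --- prohibits $f_i$ from being an equilibrium; that is, $f_i$ is in the shadow of $f_j$. This is the nontrivial direction, the exact analogue of the one-line argument given for Lemma~\ref{lem:1}.

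For the converse I would argue contrapositively: if (\ref{scalar_dual}) fails then $o_i\in H_j^{-}$, and in fact $o_i$ lies in the open half-space $\inter(H_j^{-})$ unless $(\mathbf{q}_j-\mathbf{q}_i)\cdot\mathbf{q}_j=0$; hence the single constraint coming from $f_j$ does not by itself push $o_i$ out of $\relint(f_i)$, and it is straightforward to exhibit a convex polytope having $f_i$ and $f_j$ among its faces with $o_i\in\relint(f_i)$, so $f_j$ does not shadow $f_i$. I expect the only delicate point to be the borderline case $(\mathbf{q}_j-\mathbf{q}_i)\cdot\mathbf{q}_j=0$, where $o_i\in H_j$: then $o_i$ can lie at most on $\bd(f_i)$, which is exactly the degenerate situation set aside by our conventions on stable equilibria, so the strict sign in (\ref{scalar_dual}) is the correct threshold; a genericity assumption on the face vectors (the analogue of $|\mathbf{r}_i|\neq|\mathbf{r}_j|$) disposes of it. Beyond this, the only real work is bookkeeping with the orientation convention above.

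As a sanity check --- and an alternative route --- I would note that Lemma~\ref{lem:1_dual} is just Lemma~\ref{lem:1} applied to the polar dual $P^{\circ}$ of $P$ taken with respect to $o$: the face $f_i$ with face vector $\mathbf{q}_i$ corresponds to the vertex of $P^{\circ}$ with position vector $\mathbf{r}_i^{\circ}=\mathbf{q}_i/|\mathbf{q}_i|^{2}$, and the identity
\[
(\mathbf{r}_i^{\circ}-\mathbf{r}_j^{\circ})\cdot\mathbf{r}_i^{\circ}=\frac{(\mathbf{q}_j-\mathbf{q}_i)\cdot\mathbf{q}_j}{|\mathbf{q}_i|^{2}\,|\mathbf{q}_j|^{2}}
\]
shows that the two shadowing inequalities agree in sign; together with the standard correspondence between stable equilibria of $P$ and unstable equilibria of $P^{\circ}$ relative to the common reference point $o$, this reduces everything to the vertex case. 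I would nonetheless keep the direct argument as the main proof, since it uses nothing beyond convexity and the definition of $o_i$, and avoids having to remark that $o$ need not be the centroid of $P^{\circ}$.
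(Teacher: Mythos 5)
Your proposal is correct and takes essentially the same route as the paper: its one-line proof likewise rests on the observation that (\ref{scalar_dual}) places the projection point $o_i$ strictly beyond the supporting plane of $f_j$, hence outside $P$ and so outside $f_i$, so that the existence of $f_j$ prohibits a stable equilibrium on $f_i$; your write-up merely makes the orientation convention and the half-space argument explicit. The converse implication, the degenerate case, and the polar-duality identity are extras the paper does not supply (its proof, like that of Lemma~\ref{lem:1}, only argues the forward direction), and your sketch of them is consistent with the informal level at which the shadowing relation is defined there.
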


\begin{proof}
Condition (\ref{scalar_dual}) implies that the support plane of $P$ at $o_i$ will intersect $P$, so $o_i$ can not be an equilibrium point.
\end{proof}

\begin{cor}\label{cor:q_sh_outwards}
If face $f_j$ shadows face $f_j$, then
\begin{equation}\label{qj_lt_qi}
\left|\mathbf{q}_j \right| < \left|\mathbf{q}_i \right|.
\end{equation}
\end{cor}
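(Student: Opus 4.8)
The plan is to reproduce, in dual form, the short computation that proved Corollary \ref{cor:p_sh_inwards}; the face statement is the formal mirror of the vertex statement, so essentially nothing new is needed. First I would translate the hypothesis using Lemma \ref{lem:1_dual}: "$f_j$ shadows $f_i$" means precisely $(\mathbf{q}_j-\mathbf{q}_i)\mathbf{q}_j<0$, i.e. $\mathbf{q}_j^2<\mathbf{q}_i\mathbf{q}_j$. Then I would feed this into the trivial positivity of a square: since $f_i\neq f_j$ (and the two faces have distinct supporting planes) we have $0<(\mathbf{q}_i-\mathbf{q}_j)^2=\mathbf{q}_i^2-2\,\mathbf{q}_i\mathbf{q}_j+\mathbf{q}_j^2$, and replacing the middle term's factor $\mathbf{q}_i\mathbf{q}_j$ by the strictly larger quantity $\mathbf{q}_j^2$ on the right-hand side gives $0<\mathbf{q}_i^2-2\mathbf{q}_j^2+\mathbf{q}_j^2=\mathbf{q}_i^2-\mathbf{q}_j^2$. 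Taking square roots yields $|\mathbf{q}_j|<|\mathbf{q}_i|$, which is the claim.

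There is essentially no obstacle beyond bookkeeping, but the one point that deserves care is that the shadowing relation is \emph{asymmetric}, so the indices in Lemma \ref{lem:1_dual} must not be transposed: it is the face vector $\mathbf{q}_j$ of the \emph{shadowing} face whose squared length is bounded above by the cross term, and this is exactly what forces the shadowing face to be the one farther from the center of mass $o$. This is the exact mirror image of the vertex situation in Corollary \ref{cor:p_sh_inwards}, where the shadowing vertex is likewise the more distant one; only the placement of the indices inside the scalar product differs, which is why the algebra is identical after the substitution $\mathbf{r}\mapsto\mathbf{q}$. Note also that genericity of the polytope is not actually needed here: the strict inequality $(\mathbf{q}_i-\mathbf{q}_j)^2>0$ already follows from $f_i\neq f_j$, and the strict inequality $\mathbf{q}_i^2-\mathbf{q}_j^2>0$ then comes for free, so the conclusion holds for every convex polytope.
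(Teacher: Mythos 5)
Your proof is correct and follows essentially the same route as the paper: invoke Lemma \ref{lem:1_dual} to get $\mathbf{q}_j^2<\mathbf{q}_i\mathbf{q}_j$, then combine it with the strict positivity of $(\mathbf{q}_i-\mathbf{q}_j)^2$ to obtain $0<\mathbf{q}_i^2-\mathbf{q}_j^2$, exactly as in the paper's proof of Corollary \ref{cor:q_sh_outwards} (mirroring Corollary \ref{cor:p_sh_inwards}). One wording slip: $\mathbf{q}_j^2$ is the strictly \emph{smaller} quantity (the hypothesis says $\mathbf{q}_j^2<\mathbf{q}_i\mathbf{q}_j$), and the displayed chain is valid precisely because this smaller factor is substituted into the negatively signed cross term $-2\,\mathbf{q}_i\mathbf{q}_j$, which enlarges the expression.
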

\begin{proof}
By Lemma~\ref{lem:1_dual},
$\mathbf{q}_j^2 < \mathbf{q}_i \mathbf{q}_j,$
but
$0<\left(\mathbf{q}_j - \mathbf{q}_i \right)^2 =
\mathbf{q}_j^2 - 2\mathbf{q}_j\mathbf{q}_i + \mathbf{q}_i^2 <
-\mathbf{q}_j^2 + \mathbf{q}_i^2.
$
\end{proof}

\begin{lem}\label{lem:2_dual}
If the face $f_i$ is not in the shadow of any other face $f_j$ $(j \not= i)$ then $P$ has a stable equilibrium point on the face $f_i$.
\end{lem}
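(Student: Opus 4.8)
The plan is to mirror the proof of Lemma \ref{lem:2}, but to work in the plane of the face $f_i$ rather than at a vertex. By Definition \ref{def:stable} it suffices to show that the orthogonal projection $o_i$ of the center of mass $o$ onto the plane of $f_i$ (the point called $p_i$ there) lies in $\inter f_i$. Since $o$ is the center of mass of the convex body $P$ it lies in $\inter P$, so $P$ is the intersection of the closed half-spaces $H_j^- = \{\, x : (x-o)\cdot\mathbf{q}_j \le |\mathbf{q}_j|^2 \,\}$ taken over all faces $f_j$; here the bounding plane $H_j$ of $H_j^-$ is exactly the plane of $f_j$, so $f_i = P \cap H_i$, and by the definition of the face vector $o_i - o = \mathbf{q}_i$, hence $o_i \in H_i$.

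The key step is then the trivial relation $(o_i - o)\cdot\mathbf{q}_j = \mathbf{q}_i\cdot\mathbf{q}_j$. By Lemma \ref{lem:1_dual}, the hypothesis that $f_i$ is not in the shadow of $f_j$ reads $(\mathbf{q}_j - \mathbf{q}_i)\cdot\mathbf{q}_j \ge 0$, i.e. $\mathbf{q}_i\cdot\mathbf{q}_j \le |\mathbf{q}_j|^2$, which is strict under the genericity assumption. Combining these, $(o_i - o)\cdot\mathbf{q}_j < |\mathbf{q}_j|^2$ for every $j \ne i$, while $(o_i - o)\cdot\mathbf{q}_i = |\mathbf{q}_i|^2$; hence $o_i$ satisfies every defining inequality of $P$ and lies on exactly one facet plane, $H_i$, so $o_i \in P \cap H_i = f_i$ and lies on no ridge of $f_i$, i.e. $o_i \in \inter f_i$. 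I expect the cleanest write-up is actually the contrapositive, paralleling Lemma \ref{lem:2}: if $o_i \notin \inter f_i$, then $o_i$ violates the inequality $(x-o)\cdot\mathbf{q}_j \le |\mathbf{q}_j|^2$ for some $j \ne i$ (necessarily a facet adjacent to $f_i$), which rearranges to $(\mathbf{q}_j - \mathbf{q}_i)\cdot\mathbf{q}_j \le 0$ and hence, by Lemma \ref{lem:1_dual}, says $f_j$ shadows $f_i$ --- contradicting the hypothesis.

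The only point I expect to need care with --- the analogue of the step in Lemma \ref{lem:2} where one checks the support plane ``does not intersect $P$'' --- is the passage from ``$o_i$ strictly satisfies every facet inequality but the $i$th'' to ``$o_i \in \relint f_i$''. This reduces to the standard fact that the relative boundary of $f_i$ inside $H_i$ is cut out precisely by the facet planes $H_j$ of the facets $f_j$ adjacent to $f_i$, whereas for a facet $f_j$ disjoint from $f_i$ the inequality is strict on all of $f_i$ and plays no role; so the content is confined to the adjacent facets, to which the inequality above applies verbatim. I would also note that, in contrast with the counting argument that follows, Corollary \ref{cor:q_sh_outwards} (the dual of Corollary \ref{cor:p_sh_inwards}) is not needed for this lemma; it serves only later to order the face vectors by length in the derivation of formula (\ref{main_dual}).
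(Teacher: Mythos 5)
Your proof is correct, but it follows a genuinely different route from the paper's. The paper disposes of Lemma \ref{lem:2_dual} with a one-sentence mechanical argument: if $f_i$ is not shadowed by any other face, then $P$ placed on $f_i$ cannot tip to any other face, hence $f_i$ carries a stable equilibrium; the connection to Definition \ref{def:stable} is left implicit. You instead verify Definition \ref{def:stable} directly: writing $P=\bigcap_j\{x:(x-o)\cdot\mathbf{q}_j\le|\mathbf{q}_j|^2\}$, observing $o_i=o+\mathbf{q}_i$, and translating ``not shadowed'' via Lemma \ref{lem:1_dual} into $\mathbf{q}_i\cdot\mathbf{q}_j\le|\mathbf{q}_j|^2$ for all $j\ne i$, so that $o_i\in P\cap H_i=f_i$ and, when the inequalities are strict, $o_i$ avoids every ridge and lies in $\relint f_i$. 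What your version buys is rigor: it proves, rather than assumes, that the projection point actually lands inside the face, and it isolates exactly where adjacency of facets matters; what the paper's version buys is brevity and the physical tipping picture that later connects to Dawson's criterion. One caveat you should make explicit: the hypothesis ``not in the shadow'' only gives $(\mathbf{q}_j-\mathbf{q}_i)\cdot\mathbf{q}_j\ge0$, since Lemma \ref{lem:1_dual} defines shadowing by a strict inequality, and the paper's genericity assumption concerns the vertex vectors $\mathbf{r}_i$, not the face vectors $\mathbf{q}_i$; so the boundary case $(\mathbf{q}_j-\mathbf{q}_i)\cdot\mathbf{q}_j=0$ (projection on a ridge, i.e.\ a degenerate equilibrium) is not literally excluded, and your contrapositive sketch likewise only yields shadowing when the violated inequality is strict. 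The paper's own proof glosses over the same degenerate case, so this is a shared looseness rather than a gap in your argument, but if you keep your write-up you should either add a genericity hypothesis on the $\mathbf{q}_i$ or invoke the paper's convention that ``stable equilibrium'' means the nondegenerate kind.
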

\begin{proof}
If $f_i$ is not in the shadow of any other face then $P$ will \emph{not} tip to any other face if positioned on $f_i$, so $f_i$ carries a stable equilibrium point.
\end{proof}

\begin{defn}\label{def:shadowmatrix_dual}
The elements $\bar s_{i,j}$ of the $F \times F$ \emph{face shadowing matrix} $\mathbf{\bar S}(P)$
are defined as follows: If $i\not=j$ then $\bar s_{i,j}=-1$ if $f_j$ is shadowing $f_i$ and $\bar s_{i,j}=1$ if $f_j$ is not shadowing $f_i$.
If $i=j$ then $\bar s_{i,i}=0$. Based on Lemma \ref{lem:1_dual} we have
\begin{equation}
    \bar s_{i,j}=\sign \left((\mathbf{q}_j-\mathbf{q}_i)\mathbf{q}_j\right).
\end{equation}
\end{defn}

Now we conclude the proof of Theorem \ref{thm:dual}.  
\begin{proof} Based on Definition \ref{def:shadowmatrix_dual},
for the absolute value of the row-sums of the shadowing matrix  $\mathbf{\bar S}(P)$ we can write
\begin{equation}
    \left |\sum_{j=1}^{F}s_{i,j}\right | \leq F-1
\end{equation}
with equality if and only if $f_i$ is a stable static equilibrium point of $P$. According to the formula (\ref{main_dual}) of Theorem \ref{thm:dual} in this case we add one to the value of $S(P)$.

\end{proof}

\section{The Algorithm}\label{s:alg}

\subsection{Problem statement} \label{ss:problem}
Formula (\ref{main}) of Theorem \ref{thm:main} is, by default, capable to compute the shadowing matrix $\mathbf{S}(P)$ of a given polytope $P$, based on position vectors $\mathbf{r}_i$, $(i = 1, \ldots, V)$.

However, one can also use this formula \emph{to find} monostatic polytopes with  $U(P)=1$  unstable equilibrium point.

\subsection{Notation and definition of equations}
\begin{defn}\label{def:firstvertex}
Let us denote the vertex at largest distance from the center of mass by $p_1$.
\end{defn}
\begin{lem}
Vertex $p_1$ always carries an unstable equilibrium.
\end{lem}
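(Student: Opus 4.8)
The plan is to obtain this statement as an immediate corollary of Corollary \ref{cor:p_sh_inwards} together with Lemma \ref{lem:2}, with essentially no computation. First I would recall that, by Definition \ref{def:firstvertex}, the vertex $p_1$ is the (unique, by the genericity assumption that distinct vertices lie at distinct distances from $o$) vertex for which $|\mathbf{r}_1| > |\mathbf{r}_j|$ holds for every other vertex $p_j$, $j \neq 1$.

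Next I would show that $p_1$ lies in the shadow of no other vertex. Suppose, toward a contradiction, that some vertex $p_j$ with $j \neq 1$ shadows $p_1$. Then Corollary \ref{cor:p_sh_inwards} yields $|\mathbf{r}_1| < |\mathbf{r}_j|$, contradicting the maximality of $|\mathbf{r}_1|$ among all vertex distances. Hence no vertex shadows $p_1$.

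Finally, since $p_1$ is not in the shadow of any other vertex, Lemma \ref{lem:2} applies verbatim and gives that $P$ has an unstable equilibrium point at $p_1$, which is the claim.

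I do not expect a genuine obstacle in this argument: it is a short consequence of the shadowing framework already set up. The only point deserving a remark is the role of the genericity convention, which guarantees that $p_1$ is well defined as the strictly farthest vertex; in fact this convention is not even strictly needed here, since Corollary \ref{cor:p_sh_inwards} already produces a strict inequality, so a vertex realizing the maximal distance (even non-uniquely) could never be shadowed.
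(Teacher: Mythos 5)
Your proof is correct and follows essentially the same route as the paper: the farthest vertex cannot be shadowed by any other vertex, and an unshadowed vertex carries an unstable equilibrium by Lemma~\ref{lem:2}. The only cosmetic difference is that you route the no-shadowing step through Corollary~\ref{cor:p_sh_inwards}, whereas the paper argues directly via the support plane at $p_1$ and Lemma~\ref{lem:1}; if anything, your version is slightly more explicit in citing Lemma~\ref{lem:2} for the final conclusion.
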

\begin{proof}
All vertices $p_i$ with $i>1$ are on the same side
of the support plane at $p_1$, so none of the vertices $p_i$, $i>1$
may shadow $p_1$. Via Lemma \ref{lem:1} this provides the proof.
\end{proof}
\begin{lem}\label{lem:mono}
Let $P$ be a mono-unstable polytope with $U(P)=1$. Then, for any $1 < i \leq V$ there exists at least one $0 < j \leq V$ such that $s_{i,j}=-1$.
\end{lem}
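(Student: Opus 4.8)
The plan is to obtain Lemma~\ref{lem:mono} as an immediate consequence of the contrapositive of Lemma~\ref{lem:2}, once we have identified which vertex carries the unique unstable equilibrium.

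First I would recall that, by Definition~\ref{def:firstvertex} together with the lemma immediately preceding this one, the vertex $p_1$ (the one at largest distance from $o$) always carries an unstable equilibrium. Hence, under the hypothesis $U(P)=1$, the point $p_1$ is the \emph{only} unstable equilibrium of $P$. In particular, for every index $i$ with $1<i\le V$, the vertex $p_i$ does \emph{not} carry an unstable equilibrium.

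Next I would apply the contrapositive of Lemma~\ref{lem:2}: since $p_i$ is not an unstable equilibrium point, it cannot be the case that $p_i$ fails to be in the shadow of every other vertex; that is, there must exist at least one vertex $p_j$ with $j\ne i$ that shadows $p_i$. By Definition~\ref{def:shadowmatrix} this is precisely the statement that $s_{i,j}=-1$ for that $j$, and since $s_{i,i}=0$ the index $j$ automatically lies in the claimed range $0<j\le V$ with $j\ne i$. This completes the argument.

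I do not expect any genuine obstacle here: the statement is a direct corollary of results already in hand, and the only points requiring a moment's care are (i) invoking the correct sign convention of Definition~\ref{def:shadowmatrix} (a shadowing vertex contributes $-1$ to the relevant row), and (ii) keeping the genericity assumption $|\mathbf{r}_i|\ne|\mathbf{r}_j|$ in force so that every off-diagonal entry $s_{i,j}$ equals $\pm 1$ and never $0$, which is what lets us treat ``$p_i$ is not shadowed by $p_j$'' and ``$s_{i,j}=1$'' as equivalent.
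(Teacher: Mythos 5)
Your argument is correct and is essentially the paper's own proof: the paper argues by contradiction (a row of $\mathbf{S}(P)$ with no $-1$ would make $p_{i^\star}$ a second equilibrium besides $p_1$, contradicting $U(P)=1$), which is just the contrapositive formulation you use, resting on the same ingredients (the lemma that $p_1$ carries an unstable equilibrium, Lemma~\ref{lem:2}, and Definition~\ref{def:shadowmatrix}). No substantive difference.
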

\begin{proof}
We prove Lemma \ref{lem:mono} by contradiction. If the statement of the Lemma is false then we have for some $i^{\star}>1$ a row of the shadowing matrix with positive entries (except for the main diagonal). This means that 
vertex $p_{i^{\star}}$ is not shadowed by any other vertex, i.e. it is an equilibrium (in addition to the equilibrium located at $p_1$). However, having $U(P)>1$ equilibria contradicts the statement of the lemma.
\end{proof}

\subsection{Material distribution}
Since polytopes have multi-level geometric structure, material homogeneity may be interpreted in various ways.
In subsection \ref{ss:gen} we introduced $h$-skeletons where material is uniform on the $h$-faces of the polyhderon.
Figure  \ref{fig:material}(a)-(c)
illustrate $h$-skeletons for 3-dimensional polyhedra.
While Theorems \ref{thm:main} and \ref{thm:dual} are valid irrespective of material distribution, if we want to utilize
these results to find monostatic polytopes (as mentioned in subsection \ref{ss:problem}), the difficulty of the resulting algebraic system will strongly depend on the material distribution. In this paper we concentrate on 0-skeletons which yield the simplest balance equations.
We also mention that computing for the $h=3$ homogeneous distribution in $d=3$ dimensions is identical to the problem of $0$-skeletons if we only investigate simplices;
however, the equilibria of that class of polyhedra are well understood. 
The dual case, corresponding to Theorem \ref{thm:dual} is more challenging: none of the $h=i$ ($i=0,1,\dots d$)  homogeneous skeletons yields convenient algebraic equations. Nevertheless, as we will point out in Subsection \ref{ss:dawson}, one may define a somewhat artificial distribution for which computations appear to be straightforward.

\subsection{Control parameters} 

The problem is defined by 3 integer parameters: $d$ is the dimension of the polytope,  $0 \leq h \leq  d$ defines the material distribution and $V > d$ provides the number of vertices. For brevity, we will characterize the problem with the triplet $(d,h,V)$. If $V$ is not specified, then the triplet will refer to the problem of finding the smallest value of $V$ for which a monostatic polyhedron exists.

\noindent Now we regard the following set of equations:
\begin{equation}\label{eq:main}
    s_{i,j}=-1, \quad i=2,3,\dots V, \quad j \in \{1,2,\dots i-1\}
\end{equation}
\noindent (note the condition $i<j$ that will be justified below) and proceed by
\begin{defn}\label{def:expand}
The $(d,h,V)$-\emph{expansion} of (\ref{eq:main}) is constructed in the following manner:
\begin{enumerate}
    \item We construct \emph{systems}, each  consisting of $V-1$ inequalities with index $i$ running from $2$ to $V$, defining $(V-1)!$ systems by using admissible permutations of $j$.
    \item  Each system is supplemented by $d$ moment equations
    guaranteeing that the center of mass $o$  of the $h$-homogeneous polytope is at the origin.
\end{enumerate}
So the $(d,h,V)$-expansion of (\ref{eq:main}) in $d$-dimension will consist of $(V-1)!$ systems, each having $V-1$ inequalities and $d$  equations to which, for brevity, we will henceforth refer as a \emph{system of inequalities} or \emph{system}.
\end{defn}

\subsection{Possible scenarios}
Solving the $(V-1)!$ systems of $V-1$ inequalities and $d$  equations for the $Vd$ scalar coordinates $r_{i, k}$, ($i=1,2, \dots V$, $k=1,2, \dots d$) of the position vectors $\mathbf{r}_i$ may yield two types of result:

\begin{enumerate}
\item  If \emph{none} of the $(V-1)!$ systems has a solution, that implies the existence of a row for $i>1$ in the shadowing matrix with \emph{only positive entries}. This is the necessary and sufficient condition for $P$ \emph{not} to be monostatic, so it proves the nonexistence of $d$-dimensional monostatic polytopes $P$ with $V$ vertices.

    \item If \emph{any} of the $(V-1)!$ systems  has a solution, that implies that for $i>1$, in \emph{each row} of the shadowing matrix there is a negative element. According to Lemma \ref{lem:mono}, this is the necessary and sufficient condition for $P$ to be monostatic. However, we remark that our algorithm does not include the condition that the solution should be convex,  so this has to be checked additionally.
     
\end{enumerate}

\begin{rem}
The algorithm is only testing the lower triangle of the shadowing matrix, characterized by $i>j$. It is easy to see that this is sufficient: If the $(d,h,V)$ problem has
a solution, this means that there exists a monostatic, $d$-dimensional, $h$-homogeneous polytope with $V$ vertices.  Based on Lemma \ref{lem:mono}, the shadowing matrix $\mathbf{S}(P)$ of a monostatic polytope $P$ has the property that for $i>1$, in each row there will be at least one negative element $s_{i,j}=-1$.
If all vertices $p_i (i=2 \ldots V)$ are labelled such that $\left|\mathbf{r}_1\right| \geq \left|\mathbf{r}_2\right| \geq \ldots \geq \left|\mathbf{r}_V\right| $, no negative entries in the upper triangle of $\mathbf{S}(P)$ can occur by Corollary~\ref{cor:p_sh_inwards}. In other words, $p_1$ always carries an unstable equilibrium, $p_2$ is shadowed by $p_1$, $p_3$ is shadowed either by $p_1$ or $p_2$, etc., which finally yields $(V-1)!$ possible shadowing systems.
\end{rem}

\subsection{Initial conditions}\label{ss:initial}
In $d$ dimensions a rigid body has $d$ translational degrees of freedom and $\binom{d}{2}$ rotational degrees of freedom.  By fixing the origin at $\mathbf{r}=0$ the translations are eliminated. In addition, we fix 
$n= \binom{d}{2} +1$ coordinates to eliminate the rotations and also to specify one element of a family of similar polytopes.

In $d=2$ dimensions we have $n=2$, so, in addition to fixing the origin, we fix the coordinates of one point. In $d=3$ dimensions we have $n=4$, so, in addition to fixing the origin, we fix all 3 coordinates of the first point and one additional coordinate of the second point. In $d=4$ dimensions we have $n=7$, so, in addition to fixing the origin, we fix  all 4 coordinates of the first point and 3 coordinates of the second point.

\subsection{General system of inequalities for 0-skeletons in 3 dimensions, i.e. the $(d,h,V)=(3,0,V)$ problem.}

Equation (\ref{eq:main}) is equivalent to a system
of inequalities which can be written as

\begin{equation}\label{s2}
\sum_{k=1}^{d}r_{i,k}^2 \leq \sum_{k=1}^{d}r_{i,k}r_{j,k}, \quad (i=2, \ldots,V; j=1,\ldots,i-1)   %\quad k=1,2, \ldots d) Sanyi törölte
\end{equation}
\noindent expressing that the $i$-th point is
shadowed by the $j$-th point.

If we consider the $h=0$ problem then the balance equations for the center of mass can be written as:

\begin{equation}\label{s3}
    \sum_{i=1}^Vr_{i, k}=0, \quad k=1,2,\dots d.
\end{equation}

We remark that in case of a simplex, the balance equations for the $h=0$ and $h=d$ cases coincide.
Using the considerations of subsection \ref{ss:initial},  we fix 
$n= \binom{3}{2} +1$=4 coordinates:
\begin{equation}\label{43}
\begin{array}{rcl}
    r_{1,1} & = & 1 \\
    r_{1,2} & = & 0 \\
    r_{1,3} & = & 0 \\
    r_{2,3} & = & 0,
\end{array}
\end{equation}
which, by Definition \ref{def:firstvertex} implies
\begin{equation}\label{unitbound}
    |r_{i,k}| <1, \quad i >1, \quad k=1,2, \dots d.
\end{equation}
Henceforth, it is our goal to prove the unsolvability the system (\ref{s2}-\ref{43}) for various values of $V$.

\section{Tetrahedron: the $(d,h,V)=(3,3,4)$ and $(d,h,V)=(3,0,4)$ problems}\label{sec:ex}

\begin{lem}
There exists no mono-unstable tetrahedron.
\end{lem}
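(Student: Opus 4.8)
The plan is to deduce this from the algorithm of Section~\ref{s:alg}. A homogeneous tetrahedron and its $0$-skeleton have the same center of mass, so the homogeneous and the $0$-skeleton versions of the statement coincide, and it suffices to treat the $(d,h,V)=(3,0,4)$ problem, i.e.\ to show that the system (\ref{s2})--(\ref{43}) is unsolvable for $V=4$. After a rotation and a scaling, any mono-unstable tetrahedron would supply a solution of one of the systems in this expansion, so I would first pin down the finite case split: relabel the vertices so that $|\mathbf{r}_1|\ge|\mathbf{r}_2|\ge|\mathbf{r}_3|\ge|\mathbf{r}_4|$, so that $p_1$ automatically carries an unstable equilibrium; by Lemma~\ref{lem:mono} each of $p_2,p_3,p_4$ must then be shadowed, and by Corollary~\ref{cor:p_sh_inwards} the shadower $p_{\sigma(i)}$ of $p_i$ satisfies $\sigma(i)<i$. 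This leaves the $(4-1)!=6$ systems, indexed by $\sigma(2)=1$, $\sigma(3)\in\{1,2\}$, $\sigma(4)\in\{1,2,3\}$, each consisting of the three inequalities $|\mathbf{r}_i|^2<\mathbf{r}_i\cdot\mathbf{r}_{\sigma(i)}$ ($i=2,3,4$) together with the balance equation $\mathbf{r}_1+\mathbf{r}_2+\mathbf{r}_3+\mathbf{r}_4=0$ of (\ref{s3}).

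The engine of every case is that balance equation. Substituting $\mathbf{r}_1=-(\mathbf{r}_2+\mathbf{r}_3+\mathbf{r}_4)$ turns any inequality ``$p_i$ shadowed by $p_1$'' into $2|\mathbf{r}_i|^2+\sum_{j\notin\{1,i\}}\mathbf{r}_i\cdot\mathbf{r}_j<0$, which forces some pairwise scalar products to be strongly negative; on the other hand each shadowing inequality makes a scalar product \emph{positive}, $\mathbf{r}_i\cdot\mathbf{r}_{\sigma(i)}>|\mathbf{r}_i|^2>0$. In each system I would combine these two kinds of information (and, where needed, the norm ordering) until the trivial bound $|\mathbf{r}_a+\mathbf{r}_b|^2\ge0$ is violated: the contradiction always has the shape $0\le|\mathbf{r}_a+\mathbf{r}_b|^2<Q<0$, where $Q$ is a quadratic expression in the $|\mathbf{r}_i|$ and the $\mathbf{r}_i\cdot\mathbf{r}_j$ that is seen to be negative by a one-line estimate (Cauchy--Schwarz followed by a discriminant check of the type $3x^2-4xy+4y^2>0$). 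The quickest case is $\sigma(3)=\sigma(4)=1$, where adding the three converted inequalities gives at once $|\mathbf{r}_1|^2<-(|\mathbf{r}_2|^2+|\mathbf{r}_3|^2+|\mathbf{r}_4|^2)$, which is absurd.

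The point that needs care is that the six systems do not all collapse under a single summation: when the shadowing digraph is not the star at $p_1$, a naive sum of the inequalities is self-consistent, and one must pick the right pair $\{a,b\}$ and feed in one more shadowing inequality before the bound $|\mathbf{r}_a+\mathbf{r}_b|^2\ge0$ delivers the contradiction. I would organize this by the value of $\sigma(3)$. If $\sigma(3)=2$, all three cases (any $\sigma(4)$) go at once: the converted $p_2$-inequality together with $\mathbf{r}_2\cdot\mathbf{r}_3>|\mathbf{r}_3|^2$ gives $\mathbf{r}_2\cdot\mathbf{r}_4<-2|\mathbf{r}_2|^2-|\mathbf{r}_3|^2$, whence $|\mathbf{r}_2+\mathbf{r}_4|^2<-3|\mathbf{r}_2|^2-2|\mathbf{r}_3|^2+|\mathbf{r}_4|^2<0$ using $|\mathbf{r}_4|\le|\mathbf{r}_2|$. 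If $\sigma(3)=1$, I would add the converted $p_2$- and $p_3$-inequalities and then invoke the $p_4$-inequality --- whichever of $\mathbf{r}_4\cdot\mathbf{r}_1$, $\mathbf{r}_4\cdot\mathbf{r}_2$, $\mathbf{r}_4\cdot\mathbf{r}_3$ it asserts to be positive --- to extract a sufficiently negative upper bound on $\mathbf{r}_2\cdot\mathbf{r}_4$ or on $\mathbf{r}_3\cdot\mathbf{r}_4$, again contradicting $|\mathbf{r}_a+\mathbf{r}_b|^2\ge0$. Since all six systems are thereby unsolvable, no mono-unstable tetrahedron exists, which recovers Conway's theorem by the present method. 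The only genuine obstacle I expect is the bookkeeping of these sub-cases; no single identity seems to cover all of them simultaneously.
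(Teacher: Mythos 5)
Your proposal is correct, but it takes a genuinely different route from the paper's own proof. The paper also reduces the Lemma to the $(3,0,4)$-expansion, i.e.\ the six systems (\ref{47})--(\ref{52}), but it then fixes coordinates by (\ref{43}), keeps the three balance equations componentwise, and chases signs of individual coordinates: (a) gives $r_{2,1}\geq 0$, the first balance equation together with the bound (\ref{unitbound}) forces $r_{3,1},r_{4,1}\leq 0$, which disposes of (\ref{47})--(\ref{50}) at once, and the remaining two systems fall to a sign analysis of the second (and, for (\ref{52}), third) coordinates via (e) and (f). You instead stay coordinate-free: you eliminate $\mathbf{r}_1$ through the centroid equation, rewrite each ``shadowed by $p_1$'' condition as $2|\mathbf{r}_i|^2+\sum_{j\notin\{1,i\}}\mathbf{r}_i\cdot\mathbf{r}_j<0$, and reach contradictions from $|\mathbf{r}_a+\mathbf{r}_b|^2\geq 0$ combined with the norm ordering $|\mathbf{r}_2|\geq|\mathbf{r}_3|\geq|\mathbf{r}_4|$ -- an ordering the paper invokes only to justify the lower-triangular case split, not inside its tetrahedron argument. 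I checked your two sketched groups: for $\sigma(3)=2$ your displayed estimate is valid and indeed needs no $p_4$-inequality at all, and for $\sigma(3)=1$, $\sigma(4)\in\{2,3\}$ the plan closes exactly as described (summing the two converted inequalities and using $|\mathbf{r}_2+\mathbf{r}_3|^2\geq 0$ gives $\mathbf{r}_2\cdot\mathbf{r}_4+\mathbf{r}_3\cdot\mathbf{r}_4<-|\mathbf{r}_2|^2-|\mathbf{r}_3|^2$, and the $p_4$-inequality then yields $\mathbf{r}_k\cdot\mathbf{r}_4<-|\mathbf{r}_2|^2-|\mathbf{r}_3|^2-|\mathbf{r}_4|^2$ for the appropriate $k\in\{2,3\}$, contradicting $|\mathbf{r}_k+\mathbf{r}_4|^2\geq 0$). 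What each approach buys: the paper's argument is maximally elementary but tied to the normalization (\ref{43}) and to the choice of axes; yours is normalization-free, groups the six systems differently (the paper dispatches $\sigma(3)=1$ quickly, you dispatch $\sigma(3)=2$), and amounts to exhibiting by hand positive-combination certificates of unsolvability, which is close in spirit to what the algorithm of Section \ref{sec:No-mono} later finds by randomized search, so it arguably scales more naturally toward the $V=5,6,7$ computations, at the price of the sub-case bookkeeping you acknowledge.
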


\begin{proof}

\begin{rem}
The statement of the Lemma is equivalent to the claim that the $(3,0,4)$-expansion of (\ref{s2}) has no solution. We will prove
the latter.
\end{rem}

Next, using  Definition \ref{def:expand} we construct the
$(3,0,4)$-expansion of (\ref{s2})
in two steps.

In the first step,  after substituting the initial conditions (\ref{43}) we obtain the following $V-1=3$ groups of inequalities:

\begin{equation}\label{44}
r_{2,1}^2+r_{2,2}^2 \leq r_{2,1}
\end{equation}

\begin{equation}\label{45}\left\{
\begin{array}{rcl}
r_{3,1}^2+r_{3,2}^2+r_{3,3}^2 & \leq & r_{3,1} \\
r_{3,1}^2+r_{3,2}^2+r_{3,3}^2 & \leq & r_{2,1}r_{3,1}+r_{2,2}r_{3,2}
\end{array}
\right\}
\end{equation}

\begin{equation}\label{46}\left\{
\begin{array}{rcl}
r_{4,1}^2+r_{4,2}^2+r_{4,3}^2 & \leq & r_{4,1} \\
r_{4,1}^2+r_{4,2}^2+r_{4,3}^2 & \leq & r_{2,1}r_{4,1}+r_{2,2}r_{4,2} \\
r_{4,1}^2+r_{4,2}^2+r_{4,3}^2 & \leq & r_{3,1}r_{4,1}+r_{3,2}r_{4,2}+r_{3,3}r_{4,3}
\end{array}
\right\}
\end{equation}

In the second step, we have $1 \times 2 \times 3 = 6$ possibilities to pick
one inequality each from the three groups (\ref{44}-\ref{46})
and complementing these with the $d=3$ substitution of (\ref{s3}) yields 6 systems,
each consisting of 3 equations and 3 inequalities.
Here the moment balance equations for the $h=0$ and $h=3$ problem coincide since the tetrahedron is a simplex, so the above system describes both problems. Here follows the expanded system:

\begin{equation}\label{47}\left\{
\begin{array}{lrcl}
(a) & r_{2,1}^2+r_{2,2}^2  & \leq  & r_{2,1} \\
(b)  & r_{3,1}^2+r_{3,2}^2+r_{3,3}^2 & \leq & r_{3,1}  \\
(c)  & r_{4,1}^2+r_{4,2}^2+r_{4,3}^2 & \leq & r_{4,1} \\
(d) & 1 + r_{2,1}+r_{3,1} + r_{4,1} & = & 0 \\
(e) & r_{2,2}+r_{3,2}+r_{4,2} & = & 0 \\
(f) & r_{3,3}+r_{4,3} & = & 0
\end{array}
\right\}
\end{equation}

\begin{equation}\label{48}\left\{
\begin{array}{lrcl}
(a) & r_{2,1}^2+r_{2,2}^2  & \leq  & r_{2,1} \\
(b)  & r_{3,1}^2+r_{3,2}^2+r_{3,3}^2 & \leq & r_{3,1}  \\
(c)  & r_{4,1}^2+r_{4,2}^2+r_{4,3}^2 & \leq & r_{2,1}r_{4,1}+r_{2,2}r_{4,2} \\
(d) & 1 + r_{2,1}+r_{3,1} + r_{4,1} & = & 0 \\
(e) & r_{2,2}+r_{3,2}+r_{4,2} & = & 0 \\
(f) & r_{3,3}+r_{4,3} & = & 0
\end{array}
\right\}
\end{equation}

\begin{equation}\label{49}\left\{
\begin{array}{lrcl}
(a) & r_{2,1}^2+r_{2,2}^2  & \leq  & r_{2,1} \\
(b)  & r_{3,1}^2+r_{3,2}^2+r_{3,3}^2 & \leq & r_{3,1}  \\
(c)  & r_{4,1}^2+r_{4,2}^2+r_{4,3}^2 & \leq & r_{3,1}r_{4,1}+r_{3,2}r_{4,2}+r_{3,3}r_{4,3} \\
(d) & 1 + r_{2,1}+r_{3,1} + r_{4,1} & = & 0 \\
(e) & r_{2,2}+r_{3,2}+r_{4,2} & = & 0 \\
(f) & r_{3,3}+r_{4,3} & = & 0
\end{array}
\right\}
\end{equation}

\begin{equation}\label{50}\left\{
\begin{array}{lrcl}
(a) & r_{2,1}^2+r_{2,2}^2  & \leq  & r_{2,1} \\
(b)  & r_{3,1}^2+r_{3,2}^2+r_{3,3}^2 & \leq & r_{2,1}r_{3,1}+r_{2,2}r_{3,2}  \\
(c)  & r_{4,1}^2+r_{4,2}^2+r_{4,3}^2 & \leq & r_{4,1} \\
(d) & 1 + r_{2,1}+r_{3,1} + r_{4,1} & = & 0 \\
(e) & r_{2,2}+r_{3,2}+r_{4,2} & = & 0 \\
(f) & r_{3,3}+r_{4,3} & = & 0
\end{array}
\right\}
\end{equation}

\begin{equation}\label{51}\left\{
\begin{array}{lrcl}
(a) & r_{2,1}^2+r_{2,2}^2  & \leq  & r_{2,1} \\
(b)  & r_{3,1}^2+r_{3,2}^2+r_{3,3}^2 & \leq & r_{2,1}r_{3,1}+r_{2,2}r_{3,2}  \\
(c)  & r_{4,1}^2+r_{4,2}^2+r_{4,3}^2 & \leq & r_{2,1}r_{4,1}+r_{2,2}r_{4,2} \\
(d) & 1 + r_{2,1}+r_{3,1} + r_{4,1} & = & 0 \\
(e) & r_{2,2}+r_{3,2}+r_{4,2} & = & 0 \\
(f) & r_{3,3}+r_{4,3} & = & 0
\end{array}
\right\}
\end{equation}

\begin{equation}\label{52}\left\{
\begin{array}{lrcl}
(a) & r_{2,1}^2+r_{2,2}^2  & \leq  & r_{2,1} \\
(b)  & r_{3,1}^2+r_{3,2}^2+r_{3,3}^2 & \leq & r_{2,1}r_{3,1}+r_{2,2}r_{3,2}  \\
(c)  & r_{4,1}^2+r_{4,2}^2+r_{4,3}^2 & \leq & r_{3,1}r_{4,1}+r_{3,2}r_{4,2}+r_{3,3}r_{4,3} \\
(d) & 1 + r_{2,1}+r_{3,1} + r_{4,1} & = & 0 \\
(e) & r_{2,2}+r_{3,2}+r_{4,2} & = & 0 \\
(f) & r_{3,3}+r_{4,3} & = & 0
\end{array}
\right\}
\end{equation}

Next we show that none of the systems (\ref{47})-(\ref{52}) has a solution.

\subsubsection{Systems (\ref{47}-\ref{49})}
From part (a) of either of (\ref{47}-\ref{49}) we have $r_{2,1}\geq 0$, and so (d) with (\ref{unitbound}) imply both $r_{3,1}\leq 0, r_{4,1}\leq 0$. This, however, contradicts (b), so systems (\ref{47}-\ref{49}) have no solution.

\subsubsection{System (\ref{50})}
The proof is essentially identical to the proof for systems (\ref{47}-\ref{49}) but the contradiction is found at (c) instead of (b).

\subsubsection{System (\ref{51})}

From (\ref{51})(a)  we have $r_{2,1}\geq 0$, and this implies, via
(\ref{51})(d) that $r_{3,1} <0, r_{4,1}<0 $. Substituting this into (\ref{51})(b) and (c) we get $r_{2,2}r_{3,2} >0, r_{2,2}r_{4,2}>0$, respectively implying that $r_{2,2}, r_{3,2}$ and $r_{4,2}$ have the same sign. This, however, contradicts (\ref{51})(e), so system (\ref{51}) has no solution.

\subsubsection{System (\ref{52})}
The proof is similar to the proof of system (\ref{51}). From (\ref{52})(a)  we have $r_{2,1}\geq 0$, and this implies, via
(\ref{52})(d) that $r_{3,1} <0, r_{4,1}<0 $. 
 This yields, via
(\ref{52})(b)  $r_{2,2}r_{3,2} >0 $ and further, via (\ref{52})(e) $r_{3,2}r_{4,2} <0$.  (\ref{52})(f) yields $r_{3,3}r_{4,3} <0 $. Since we already showed that
$r_{2,1}r_{4,1} <0$, substituting into (\ref{52})(c) yields a contradiction, so system (\ref{52}) has no solution.

\end{proof}

\section{Computing the lower bound for $V^U_0$: the $(d,h,V)=(3,0,\{5,6,7\})$ problems.}\label{sec:No-mono}

Our  goal is to prove  Theorem \ref{thm:7}, to which we now give an equivalent, more detailed formulations as
\begin{cor}\label{cor:7}
The $(d,h,V)=(3,0,\{5,6,7\})$ problems have no solution.  
\end{cor}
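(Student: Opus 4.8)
The plan is to treat Corollary \ref{cor:7} as a finite collection of polynomial feasibility questions and to dispose of each one. Fix $V\in\{5,6,7\}$. By Definition \ref{def:expand} the $(3,0,V)$-expansion of (\ref{s2}) consists of $(V-1)!$ systems, each built from $V-1$ of the quadratic shadowing inequalities (\ref{s2}), the three linear moment equations (\ref{s3}), and the normalization (\ref{43}); by (\ref{unitbound}) every coordinate of every unknown vertex lies in $(-1,1)$. By the dichotomy of the ``Possible scenarios'' subsection, if \emph{every} one of these $(V-1)!$ systems is unsolvable, then no mono-unstable $0$-skeleton in $\Re^3$ has $V$ vertices. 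Hence it suffices to certify, for $V=5$, $V=6$ and $V=7$ in turn, that each system in the corresponding expansion has no real solution; together with the already-known bound $V^U_0\ge 5$ this yields $V^U_0\ge 8$, i.e.\ Theorem \ref{thm:7}.

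First I would prune the list of systems, since only a small fraction of the $(V-1)!$ shadowing patterns actually need to be examined. Any pattern in which some $p_i$ is declared shadowed by a $p_j$ with $j\ge i$ is ruled out a priori by Corollary \ref{cor:p_sh_inwards} together with the labelling convention $|\mathbf r_1|\ge\cdots\ge|\mathbf r_V|$, and most of the remaining patterns fall to the elementary sign arguments already used in Section \ref{sec:ex}: from (\ref{s2}) with $i=2$, $j=1$ one gets $r_{2,1}\ge 0$, whence the first equation of (\ref{s3}) forces $\sum_{i\ge 3}r_{i,1}\le-1$, which combined with (\ref{unitbound}) and the other inequalities kills many branches by hand. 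Within a single system the residual reflections $r_{i,2}\mapsto-r_{i,2}$ and $r_{i,3}\mapsto-r_{i,3}$ leave (\ref{s2})--(\ref{43}) invariant, so two further sign normalizations may be imposed without loss of generality. In practice $V=5$ should be within reach of such hand arguments; for $V=6$ and especially $V=7$ the surviving cases are handed to an algorithm.

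The core of the proof is then a rigorous branch-and-bound emptiness test for the semialgebraic set defined by one system, run inside the box $[-1,1]^{3V-4}$. On a current sub-box the algorithm would (i) contract the coordinate intervals by propagating each equation of (\ref{s3}) and each inequality $\sum_k r_{i,k}^2\le\sum_k r_{i,k}r_{j,k}$ through interval arithmetic, and (ii) strengthen this by a linear relaxation in which every bilinear term $r_{a,k}r_{b,k}$ is replaced by a fresh variable confined to its McCormick envelope over the sub-box; if interval propagation empties the box or the resulting linear program is infeasible, the branch is closed, otherwise the box is bisected along a widest coordinate and the two halves are recursed on. Every arithmetic operation is performed over $\Q$, so a closed branch is a genuine proof of infeasibility on that box, and when the recursion tree is fully closed the system is certified unsolvable. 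Executing this for all surviving systems at $V=5,6,7$ establishes Corollary \ref{cor:7}.

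The step I expect to be the real obstacle is guaranteeing termination of the branch-and-bound: a priori the feasible set of a system could be empty while the relaxations detect this only after unboundedly many subdivisions, because emptiness need not be ``robust'' near a degenerate limiting configuration. I would handle this by isolating, for each system, the measure-zero set of limiting configurations at which the relaxations can stall — these turn out to have $|\mathbf r_i|=|\mathbf r_j|$ for some $i\ne j$, or to satisfy some shadowing inequality (\ref{s2}) with equality, and are therefore excluded by the genericity hypothesis of Subsection \ref{ss:problemstatement} and by Definition \ref{def:unstable} (nondegeneracy) — and disposing of them by a separate elementary argument, leaving a uniform positive margin of infeasibility on the rest of the box that the procedure is guaranteed to certify in finitely many steps. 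A secondary, purely computational obstacle is scale: $6!=720$ systems in $17$ unknowns for $V=7$, each demanding exact rational linear algebra, so the symmetry reduction and branching heuristics above are essential to keep the computation tractable.
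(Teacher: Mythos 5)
Your reduction of Corollary \ref{cor:7} to the infeasibility of the $(V-1)!$ systems (\ref{s2})--(\ref{43}) for $V=5,6,7$ matches the paper, but the engine you propose has a genuine gap exactly at the step you flag. The systems of Definition \ref{def:expand} are written with \emph{non-strict} inequalities, and the corollary asserts that these systems have no solutions at all; you are therefore not entitled to discard configurations in which some inequality of (\ref{s2}) holds with equality, or in which $|\mathbf{r}_i|=|\mathbf{r}_j|$, by appealing to the genericity assumption of Subsection \ref{ss:problemstatement} or the nondegeneracy in Definition \ref{def:unstable}: those are hypotheses about the polyhedron being sought, not about solutions of the algebraic system, and if some system had only such ``degenerate'' solutions the corollary as stated would simply be false. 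This makes your termination guarantee circular: the uniform positive violation margin on the closed box $[-1,1]^{3V-4}$, which is what makes the interval/McCormick branch-and-bound provably finite, exists if and only if the closed system is infeasible on the closed box -- i.e.\ it is equivalent to the statement being proved -- and the ``separate elementary argument'' that is supposed to dispose of the stalling set is never given. (A fully closed branch-and-bound tree \emph{would} be a valid proof; what is missing is any legitimate reason to believe it closes, plus, of course, the computation itself -- $720$ systems in $17$ unknowns for $V=7$. Your expectation that $V=5$ falls to short hand arguments is also optimistic: the authors report that only $7$ of the $24$ systems admit such proofs and that a complete hand proof runs to roughly $22$ pages.)

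For contrast, the paper's proof avoids boxes, branching and termination issues altogether. After eliminating $\mathbf{r}_V$ via (\ref{s3}), it exhibits for each system positive integer weights $c_2,\dots,c_V$ such that the weighted sum $f$ of the left-hand sides of (\ref{inequalities2-V-1})--(\ref{inequalityV}) is a quadratic polynomial on $\mathbb{R}^{3V-7}$ with positive definite Hessian and positive rational minimum (Appendices 1--4, found by randomized search and verified by exact rational linear algebra). Since any solution of the system would force $f\le 0$, one such certificate per system proves \emph{global} infeasibility, not merely infeasibility on a box, and in particular disposes of the boundary and equality cases that your scheme must treat separately. If you wish to salvage your route, the cleanest repair is to replace the branch-and-bound by certificates of this kind, or at least to prove infeasibility of the closed systems over the closed box by some independent argument before invoking finite termination.
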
 We prove Corollary \ref{cor:7} (and thus also  Theorem \ref{thm:7}) and Lemma \ref{lem:main}) by applying the algorithm laid out in Section \ref{s:alg}.

\begin{proof}

We show that the system (\ref{s2})-(\ref{s3}) has no solution for $d=3, h=0$, $V \leq 7.$
The case of $V = 4$ was proven by elementary considerations in Section \ref{sec:ex}. The case $V = 5$ can also be proven in a similar way, however, only 7 of the 24 systems have such short proofs, the others require more steps: the total length is 22 (hand written) pages. The reason of that it is not included in this paper is that we have found another, optimization-based method which proves the unsolvability for $V = 6$ and $V = 7$, too. 
    
Departing from (\ref{s3}), $r_{V, k}$ can be expressed as
\begin{equation}\label{rVk-espressed}
    r_{V, k} = - \sum_{i=1}^{V-1}r_{i, k}, \quad k=1,2,3.
\end{equation}
The substitutions (\ref{rVk-espressed}) convert the system of inequalities (\ref{s2}) and equations (\ref{s3}) into $V-1$ inequalities:
  
\begin{equation}\label{inequalities2-V-1}
\sum_{k=1}^{3}r_{i,k}^2 - \sum_{k=1}^{3}r_{i,k}r_{j,k} \leq 0 , \quad i=2, \ldots,V-1,
\end{equation}
\begin{equation}\label{inequalityV}
\sum_{k=1}^{3}{\left( - \sum_{i=1}^{V-1}r_{i, k} \right)}^2 - \sum_{k=1}^{3}{\left( - \sum_{i=1}^{V-1}r_{i, k} \right)}r_{j,k} \leq 0 , 
\end{equation}
where $ j \in \{1,\ldots,i-1\}$, resulting in $(V-1)!$ systems.
   
According to (\ref{43}), $ r_{1,1}  =  1, r_{1,2} , r_{1,3} , r_{2,3}  =  0$,  and since $r_{V,1}$, $r_{V,2}$ and $r_{V,3}$ were eliminated in (\ref{rVk-espressed}), 
the number of free variables $r_{i,k}$ is $3V-7$.
Let us define the parametric function $f: \mathbb{R}^{3V-7} \rightarrow \mathbb{R}$ 
as the weighted sum of the left hand sides of the inequalities (\ref{inequalities2-V-1})-(\ref{inequalityV}) above
\begin{equation}\label{function-f}
  f = \sum_{i=2}^{V-1} c_i \left( \sum_{k=1}^{3}r_{i,k}^2 - \sum_{k=1}^{3}r_{i,k}r_{j,k} \right) 
    +c_V \left[ \sum_{k=1}^{3}{\left( - \sum_{i=1}^{V-1}r_{i, k} \right)}^2 - \sum_{k=1}^{3}{\left( - \sum_{i=1}^{V-1}r_{i, k} \right)}r_{j,k} \right] 
\end{equation},
where the coefficients ${c_i}, i=2,\ldots,V$ are arbitrary positive numbers.

Let us fix the values $ j \in \{1,\ldots,i-1\}$ arbitrarily. If there exist positive
coefficients ${c_i}, i=2,\ldots,V$ such that $f$ is positive everywhere, then 
the unsolvability of system (\ref{inequalities2-V-1})-(\ref{inequalityV}) follows: 
assume for contradiction that it has a solution, and substitute this solution in (\ref{function-f}), yielding a non-positive value of $f$, which is not possible once $f$ is positive.

How to find coefficients and how to check the positivity of $f$?
Positive polynomials \cite{PrestelDelzell2001,Marshall2008} have many applications \cite{Lasserre2010,HenrionGarulli2005}.
  
However, in our case, suprisingly enough a simple
randomized search for the coefficients works, at least up to $V=7$ vertices. We benefit from that if $f$ is convex, then its minimum value can be calculated simply.  The positivity of $f$ can be proved by showing its convexity and finding a positive minimum. Since $f$ is a multi-variate polynomial of degree two, the first-order conditions of minimality yield a system of linear equations. In order to check convexity, the second-order condition is the positive definiteness of $Hf$, the Hessian of $f$.
Note that $Hf$ depends on coefficients ${c_i}$ only, following again from that $f$ is quadratic.

The algorithm for proving unsolvability of the system of inequalities (\ref{inequalities2-V-1})-(\ref{inequalityV}) is summarized below.

\begin{verbatim}
for all systems of inequalities 
step 1. generate random positive integer values of c_2,...,c_n
step 2. if Hf is positive definite 
        then if min f > 0, 
             then print('this system is unsolvable')
                  go to the next system in the for cycle
             else go to step 1  
        else go to step 1 
end for
\end{verbatim}
   
Appendices 1-4 include the coefficients and the minimum vale of function $f$ for all systems written for $V=4,5,6,7$ vertices, respectively. Since the minimum values are positive, all systems are unsolvable.
    
The computational approach above does not include numerical errors because all calculations deal with integer and rational numbers, resulting in exact rational numbers, too. 

\end{proof}

\begin{rem}

In case of $V=8$ vertices, there are some systems of inequalities (e.g. system (\ref{inequalities2-V-1})-(\ref{inequalityV}) written for the choice of $j=i-1$ for all $i=2,\ldots,V-1$ ), where we could not prove that $f$ is positive with appropriate coefficients as all the random trials
led to negative minima, and at the same time at least one inequality of the system (\ref{inequalities2-V-1})-(\ref{inequalityV}) was violated -- neither unsolvability of all of these systems, or solvability of at least one of these systems follows.

The lower bound for the number of minimal vertices, that a three-dimensional  mono-unstable 0-skeleton must have, has now been improved to 8. Upper bounds may come when, for a given $V$, a system of polynomial inequalities (\ref{inequalities2-V-1})-(\ref{inequalityV}) has a solution, and the  corresponding polyhedron is convex (see Subsection 3.5). Finding a solution of polynomial inequalities is itself a challenging problem \cite{GrigorievVorobjov1988,Strzebonski2000}.
\end{rem}

\section{Concluding remarks}\label{sec:con}

\subsection{The dual problem: search for mono-stable polyhedra}\label{ss:dawson}

So far we demonstrated how Theorem \ref{thm:main} can be converted into an algorithm to establish the lower bound
for the number of vertices of a mono-unstable polyhedron.  To illustrate the algorithm we computed the case of 0-skeletons
and found the lower bound $V^U_0 \geq 8$ (Theorem \ref{thm:7}).   % BS inserted "\qeg" instead of "=" , and deleted "to be"

 Theorem \ref{thm:dual} is  the dual of Theorem \ref{thm:main}, and it could, in principle, serve as the basis of a dual algorithm, searching for mono-stable polyhedra. However, as we will explain below, the only mass-distribution where this computation would be of comparable difficulty as 0-skeletons for mono-unstable polyhedra is physically rather counter-intuitive. To better understand the background, we point out the connection of our work to Dawson's research on mono-stable simplices.

Starting on a proof by Conway for the non-existence of a mono-stable tetrahedron,  Dawson \cite{Dawson} investigated the existence of mono-stable simplices in $d$-dimensions.
 This research was continued in \cite{Dawson2,Dawson3} and ultimately led to the proof that for $d<9$ no mono-stable simplex exists and for $d>9$ there exist mono-stable simplices. The $d=9$ case is not yet resolved.
 The problem of the existence of mono-stable simplices  is closely related to our problem and below we will point out the main connections as well as the main differences.
 
 The key idea in Dawson's arguments is what he calls the \emph{projection criterion}:
 \begin{equation}\label{dawson1}
\lvert\mathbf{{x}_i}\rvert < \lvert\mathbf{{x}_j}\rvert\cos\theta_{ij},
\end{equation}
where $\mathbf{x}_i$ is orthogonal to face $f_i$ of the simplex $s$
and $|\mathbf{x}_i|$ measures
the area of $f_i$ and $\theta_{ij}$ is the angle between $\mathbf{x}_i$ and $\mathbf{x}_j$.
If the projection criterion (\ref{dawson1}) holds, then a $h=3$- (or $h=0$)-homogeneous simplex, if placed on face $f_i$, will tip over to face $f_j$, so we will call (\ref{dawson1}) the \emph{tipping condition}. Next we show

\begin{prop}\label{prop:tip}
In the case of homogeneous simplices and 0-skeletal simplices,  equation (\ref{scalar_dual}) is equivalent to Dawson's tipping condition (\ref{dawson1}).
\end{prop}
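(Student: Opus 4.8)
The plan is to translate both sides of the claimed equivalence into the same inner-product statement about the face vectors, using the geometric dictionary that relates the face vector $\mathbf{q}_i$ of the paper to Dawson's vector $\mathbf{x}_i$. First I would fix the standard fact that for a homogeneous simplex (and, by \cite{KrantzMcCarthyParks}, equivalently for its $0$-skeleton) the center of mass $o$ is the centroid of the vertices, and recall that Dawson's $\mathbf{x}_i$ is the outward normal to face $f_i$ scaled so that $|\mathbf{x}_i| = \area(f_i)$, while the paper's $\mathbf{q}_i$ is the outward normal scaled so that $|\mathbf{q}_i| = \dist(o, \aff f_i)$. These differ only by a positive scalar $\lambda_i > 0$, i.e. $\mathbf{x}_i = \lambda_i \mathbf{q}_i$ with $\lambda_i = \area(f_i)/\dist(o,\aff f_i)$; in fact, since $\dim$-dimensional volume of the cone over $f_i$ with apex $o$ is $\frac{1}{d}\area(f_i)\dist(o,\aff f_i)$ and all these cone-volumes are equal (they sum to $\mathrm{vol}(s)$ and, for a simplex with $o$ the centroid, each equals $\mathrm{vol}(s)/(d{+}1)$), the quantity $\area(f_i)\dist(o,\aff f_i)$ is the \emph{same} for all $i$. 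Hence $\lambda_i = c/\dist(o,\aff f_i)^2$ for a common constant $c>0$; this observation is what makes the two criteria line up cleanly rather than merely being ``equivalent up to positive rescaling.''

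Next I would rewrite (\ref{scalar_dual}). Expanding, $(\mathbf{q}_j - \mathbf{q}_i)\mathbf{q}_j < 0$ is $|\mathbf{q}_j|^2 < \mathbf{q}_i\mathbf{q}_j = |\mathbf{q}_i||\mathbf{q}_j|\cos\theta_{ij}$, i.e. $|\mathbf{q}_j| < |\mathbf{q}_i|\cos\theta_{ij}$, where $\theta_{ij}$ is the angle between $\mathbf{q}_i$ and $\mathbf{q}_j$ — which is the same as the angle between $\mathbf{x}_i$ and $\mathbf{x}_j$ since they are positive multiples. Now substitute $|\mathbf{q}_k| = |\mathbf{x}_k|/\lambda_k = |\mathbf{x}_k|\dist(o,\aff f_k)^2/c = |\mathbf{x}_k|\cdot(c/\area(f_k))\cdot(1/c)\cdot\dots$; more directly, using $\area(f_k)\dist(o,\aff f_k) = c'$ constant, one gets $\dist(o,\aff f_k) = c'/\area(f_k) = c'/|\mathbf{x}_k|$, so $|\mathbf{q}_k| = c'/|\mathbf{x}_k|$. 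Plugging in, $|\mathbf{q}_j| < |\mathbf{q}_i|\cos\theta_{ij}$ becomes $c'/|\mathbf{x}_j| < (c'/|\mathbf{x}_i|)\cos\theta_{ij}$, i.e. $|\mathbf{x}_i| < |\mathbf{x}_j|\cos\theta_{ij}$, which is exactly Dawson's tipping condition (\ref{dawson1}). Reversing each step shows the converse, so the two conditions are equivalent.

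The main obstacle, and the step I would write out most carefully, is the equal-cone-volume identity $\area(f_i)\dist(o,\aff f_i) = \mathrm{const}$: this is precisely where the hypothesis ``simplex'' and ``$o$ = centroid'' (equivalently, the $h=3$ homogeneous or $h=0$ $0$-skeletal mass distribution) is used, and it is the reason the reciprocal relationship $|\mathbf{q}_k| \propto 1/|\mathbf{x}_k|$ holds. For a simplex, the centroid of the vertices subdivides it into $d{+}1$ sub-simplices of equal volume (each spanned by $o$ and one facet), giving the identity immediately; I would note that for $0$-skeletons the centroid coincides with the homogeneous centroid by \cite{KrantzMcCarthyParks}, so the same computation applies. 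Everything else is the elementary algebraic manipulation of the inner product above, and the care needed is only to check that each inequality step is reversible (all the scalars involved — $\lambda_i$, $c'$, $|\mathbf{x}_k|$, $|\mathbf{q}_k|$ — are strictly positive, and $\cos\theta_{ij}$ appears on the side that keeps the implications bidirectional).
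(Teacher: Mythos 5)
Your proposal is correct and follows essentially the same route as the paper: both hinge on the identity $\lvert\mathbf{x}_i\rvert\,\lvert\mathbf{q}_i\rvert = d\,\mathrm{Vol}(s)/(d+1)$ (the equal-volume decomposition of the simplex into cones over its facets with apex at the centroid), and then convert $(\mathbf{q}_j-\mathbf{q}_i)\mathbf{q}_j<0$ into $\lvert\mathbf{q}_j\rvert<\lvert\mathbf{q}_i\rvert\cos\theta_{ij}$ and use the reciprocal relation to recover Dawson's condition. Your write-up is in fact somewhat more explicit than the paper's about the positive proportionality $\mathbf{x}_i=\lambda_i\mathbf{q}_i$ and the reversibility of each step, but the underlying argument is the same.
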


\begin{proof}
First we show that

\begin{equation}\label{forditott_index}
\lvert\mathbf{x}_i\rvert\lvert\mathbf{q}_i\rvert = d \cdot\frac{Vol(s)}{d+1}, 
\end{equation}
where $Vol(s)$ denotes the volume of the simplex $s$.

%2 és 3 dimenzióban megvan, d dimenzióban?
%d-dimenziós szimplex esetén a súlypont a súlyvonalat 1:d arányban osztja.
%In a d-dimensional simplex the barycenter divides the median in 1:d.
Let $s$ be a $d$-dimensional simplex with center of mass $o$, let $f_i$ be a $(d-1)$-dimensional face of $s$, let the center of mass of $f_i$ be denoted by $o_i$ and let $v_i$ be the vertex opposite $f_i$. It is known \cite{KrantzMcCarthyParks} that $|v_i,o|/|o,o_i|=d$ (where $|a,b|$ denotes the length of the line segment $\overline{ab}$). 

Let $s_i$ be a $d$-dimensional simplex defined by the following $d+1$ vertices: face $f_i$ defines $d$ vertices and we add $o$ as the $(d+1)$st vertex. $\lvert\mathbf{q}_i\rvert$ is the height of $s_i$ orthogonal to $f_i$. From this it follows that $Vol(s)/Vol(s_i)=d+1$ for $i=1,2, \dots d+1$.
Since the left hand side of (\ref{forditott_index}) is constant, (\ref{dawson1}) implies that

\begin{equation}
    \lvert\mathbf{q}_j\rvert - \lvert\mathbf{q}_i\rvert\cos\theta_{ij} < 0,
\end{equation}
and this yields (\ref{scalar_dual}) via
\begin{equation}
\lvert\mathbf{q}_j\rvert^2 - \lvert\mathbf{q}_j\rvert\lvert\mathbf{q}_i\rvert\cos\theta_{ij} < 0.
\end{equation}

\end{proof}

As noted in Proposition \ref{prop:tip}, Dawson's tipping condition (\ref{dawson1}) only applies if 
the polyhedron is either a homogeneous simplex or a 0-skeleton  of a simplex and
it is only equivalent to equation (\ref{scalar_dual}) under the same condition.
Now we show that there exists a material distribution for which
Dawson's tipping condition is \emph{reversed}: if (\ref{dawson1}) holds then the simplex will tip from
face $f_j$ to face $f_i$.

\begin{defn}\label{def:fheavy}
We will call a $3$-dimensional polytope with $F$ faces a \emph{dual 0-skeleton} if $\sum \mathbf{q}_i=0$,
with the vectors  $\mathbf{q}_i$, ($i=1,2, \dots F$)  defined in subsection \ref{ss:problemstatement}.
\end{defn}
The balance equations for the center of mass of a dual 0-skeleton can be written as
\begin{equation}\label{s3dual}
    \sum_{i=1}^{F}q_{i, k}=0, \quad k=1,2,\dots d,
\end{equation}
which is analogous to equation (\ref{s3}). Based on Theorem \ref{thm:dual} and  the balance equations (\ref{s3dual}) we can construct the dual version of the algorithm presented in section \ref{s:alg}.

The name for dual 0-skeletons is motivated by their role in case of simplices where
the dual 0-skeleton is uniquely defined and corresponds to a center of mass in the interior
of the simplex. In fact, it is straightforward to find the center of mass $o$ for the dual 0-skeleton in a simplex: 
let $S$ be a $d$-dimensional simplex and we regard the vectors $\mathbf{x}_i$ ($i=0,1, \dots d+1)$ introduced in equation (\ref{dawson1}). Assume all $\mathbf{x}_i$ have identical origin $o$ and we denote the endpoints of the vectors by $X_i$. Now we regard the set of planes $p_i$ each of which is normal to the corresponding vector $\mathbf{x}_i$ at the endpoint $X_i$. It is easy to see that the simplex $S'$ defined by these planes will be similar to $S$, the set of vectors $\mathbf{q'}_i$ for $S'$ can be defined as $\mathbf{q'}_i=\mathbf{x}_i$. If we place unit masses at the points $X_i$ then this mass distribution defines the dual 0-skeleton of $S'$.
Dawson proved that a $h=3$ homogeneous (or $h=0$ homogeneous 0-skeleton) simplex can  tip from face $i$ to face $j$ if and only if his tipping condition (\ref{dawson1})
is true. In case of a dual 0-skeleton simplex Dawson's tipping condition is \emph{reversed}: it implies the opposite, i.e. that it would tip from face $j$ to face $i$.

\begin{prop} \label{prop:counter-tip}
For the dual 0-skeleton of any $d$-dimensional simplex $S$ the tipping condition (\ref{dawson1}) is reversed.
\end{prop}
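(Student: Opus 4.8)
The plan is to unwind the definition of the dual 0-skeleton of a simplex $S$, and to show that for this particular mass distribution the roles of the two quantities appearing in Dawson's tipping condition are interchanged compared with the homogeneous/0-skeleton case treated in Proposition \ref{prop:tip}. Recall that in the construction preceding the statement we take the vectors $\mathbf{x}_i$ (normal to the faces $f_i$, with $|\mathbf{x}_i|$ equal to the area of $f_i$) emanating from a common point $o$, and we build the similar simplex $S'$ whose face vectors satisfy $\mathbf{q'}_i=\mathbf{x}_i$; placing unit masses at the endpoints $X_i$ of the $\mathbf{x}_i$ gives the dual 0-skeleton, and by Definition \ref{def:fheavy} the point $o$ is indeed its center of mass since $\sum \mathbf{q'}_i=\sum\mathbf{x}_i=0$ (this last identity, $\sum\mathbf{x}_i=0$ for the face-area normals of a simplex, is the Minkowski relation and should be invoked explicitly). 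So for $S'$ the face vectors are literally the $\mathbf{x}_i$; in particular $\theta_{ij}$ — the angle between $\mathbf{x}_i$ and $\mathbf{x}_j$ — is now also the angle between $\mathbf{q'}_i$ and $\mathbf{q'}_j$, and $|\mathbf{q'}_i|=|\mathbf{x}_i|$.

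Next I would substitute this into the shadowing criterion. By Lemma \ref{lem:1_dual}, face $f_i$ of $S'$ is shadowed by face $f_j$ (equivalently, $S'$ tips from $f_j$ toward $f_i$) precisely when $(\mathbf{q'}_j-\mathbf{q'}_i)\mathbf{q'}_j<0$, i.e. $|\mathbf{q'}_j|^2<|\mathbf{q'}_i||\mathbf{q'}_j|\cos\theta_{ij}$, which simplifies to $|\mathbf{q'}_j|<|\mathbf{q'}_i|\cos\theta_{ij}$, that is $|\mathbf{x}_j|<|\mathbf{x}_i|\cos\theta_{ij}$. But Dawson's tipping condition (\ref{dawson1}) with the roles of $i$ and $j$ as in its original statement reads $|\mathbf{x}_i|<|\mathbf{x}_j|\cos\theta_{ij}$, which is exactly the inequality one gets for the homogeneous simplex to tip from $f_i$ to $f_j$ (by Proposition \ref{prop:tip}). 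Comparing the two, the inequality governing "$S'$ tips from $f_j$ to $f_i$" as a dual 0-skeleton is the same inequality that governs "the homogeneous simplex tips from $f_i$ to $f_j$": the tipping arrow has been reversed. I would phrase the conclusion carefully in terms of a fixed ordered pair of faces so there is no ambiguity about which direction is meant, and note that the genericity assumption ($|\mathbf{q}_i|\neq|\mathbf{q}_j|$, equivalently $|\mathbf{x}_i|\neq|\mathbf{x}_j|$) keeps all inequalities strict so no degenerate equilibria intervene.

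The only subtlety — and the step I would be most careful about — is bookkeeping of the volume/area factor. In Proposition \ref{prop:tip} the equivalence of (\ref{scalar_dual}) and (\ref{dawson1}) for the homogeneous simplex relied on the identity $|\mathbf{x}_i||\mathbf{q}_i|=d\cdot Vol(s)/(d+1)$, i.e. on the fact that the face vector $\mathbf{q}_i$ of the homogeneous simplex is antiparallel to $\mathbf{x}_i$ but with reciprocally scaled length. For the dual 0-skeleton this reciprocal rescaling is absent: we have instead $\mathbf{q'}_i=\mathbf{x}_i$ outright, so $|\mathbf{q'}_i|$ is proportional to the face area $|\mathbf{x}_i|$ rather than inversely proportional. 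It is precisely this sign flip in the exponent of the scaling (length $\propto$ area versus length $\propto 1/\text{area}$) that flips the direction of the inequality, and hence the tipping arrow; I would make this the explicit punchline of the argument. The geometric fact $Vol(S)/Vol(S_i)=d+1$ and $|v_i,o|/|o,o_i|=d$ from the proof of Proposition \ref{prop:tip} are not needed here, since for $S'$ the relation $\mathbf{q'}_i=\mathbf{x}_i$ is built into the construction; the proof is therefore shorter than that of Proposition \ref{prop:tip}, essentially a one-line substitution once the setup is in place.
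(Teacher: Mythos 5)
Your proposal is correct and follows essentially the same route as the paper's proof: both use the fact that for the dual 0-skeleton the face vectors are positive multiples of the $\mathbf{x}_i$ (the paper writes $\mathbf{q}_i=\alpha\mathbf{x}_i$, $\alpha>0$; you work in the similar simplex $S'$ with $\mathbf{q'}_i=\mathbf{x}_i$), substitute into the shadowing condition (\ref{scalar_dual}), and observe that the resulting inequality is (\ref{dawson1}) with the subscripts $i$ and $j$ interchanged. Your explicit appeal to the Minkowski relation $\sum\mathbf{x}_i=0$ and your remark on the area-versus-reciprocal-area scaling only make explicit what the paper leaves implicit.
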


\begin{proof}
The construction scheme described at Definition~\ref{def:fheavy} proves that the center of mass $o$ for the dual-0-skeleton of a $d$-dimensional simplex $S$ is an interior point of $S$ and $\mathbf{q}_i = \alpha \mathbf{x}_i$ with $\alpha>0$ holds for any pair $( \mathbf{x}_i, \mathbf{q}_i)$. Expressing the face shadowing condition (\ref{scalar_dual}) in terms of $\mathbf{x}_i$ yields $(\mathbf{x}_j-\mathbf{x}_i)\mathbf{x}_j < 0$, which can be obtained from (\ref{dawson1}) by interchanging subscripts $i$ and $j$.
\end{proof}

In case of $d$-dimensional polytopes with $F > d+1$ faces the center of mass corresponding to a dual 0-skeleton  may not be in the interior of the polytope.  Figure \ref{fig:dualskeleton} illustrates the 0-skeleton and the dual 0-skeleton of a triangle and also shows a quadrangle where the center of mass for the dual skeleton is not contained in the interior.

\begin{figure}[ht]
\begin{center}
\includegraphics[width=\columnwidth]{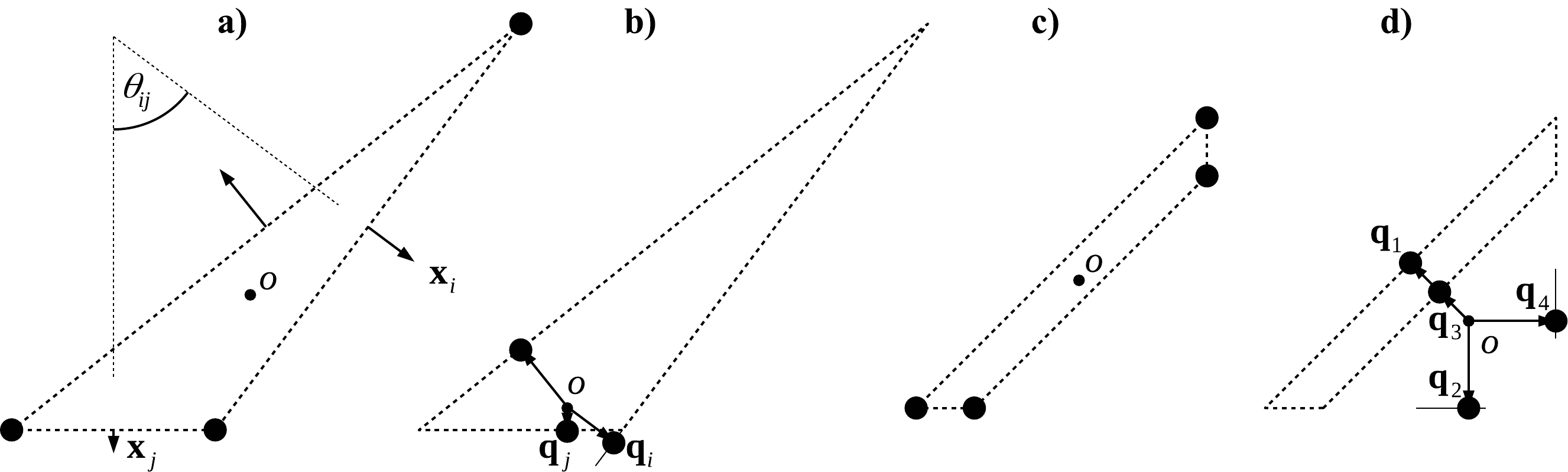}
\caption{Illustration of 0-skeletons and  dual 0-skeletons in 2D. (a) 0-skeleton of triangle,
(b) uniquely defined dual 0-skeleton of the same triangle (c) 0-skeleton of a quadrangle (d) possible dual 0-skeleton of the same quadrangle.  Large black dots mark unit masses.}
\label{fig:dualskeleton}
\end{center}
\end{figure}

Summarizing, we may say that the utilization of Theorem \ref{thm:dual} as the basis of an algorithm to find
mono-stable  polyhedra is computationally feasible only if we investigate dual 0-skeletons, however, the latter
 (except for the case of simplices)  may not be physically relevant, so the 
dual algorithm, based on Theorem \ref{thm:dual} does not appear to be of practical interest. Since the attention was previously focused on mono-stable polyhedra (and the mono-unstable case was not considered), this observation may explain why this algorithm was not investigated earlier.

\subsection{Summary}
In this paper we improved the previously known \cite{Conway, balancing} lower bound $V^U_0 \geq 5$ on the minimal number of vertices for a convex, mono-unstable 0-skeleton to $V^U_0 \geq 8$. This result also implies the same lower bound for
the minimal number of vertices for a convex mono-monostatic 0-skeleton, so we also proved $V^*_0 \geq 8$.

On one hand, we think that these lower bounds are not yet close to the actual minimal values. On the other hand, although no mathematical evidence exists, intuitively it looks plausible that the same lower bounds are valid for $V^U_3, V^*_3$, i.e., for homogeneous polyhedra.

The algorithm presented in this paper is, in principle, also capable to compute  $V^U_3$, however, we did not yet attempt to implement the balance equations for this case. Also, the same algorithm is (again, in principle) capable to compute higher dimensional problems. Dawson \cite{Dawson} investigated the minimal dimension in which a simplex may be mono-stable
and in our notation his result can be written for $d=10$ dimensions as  $V^S_0=11$.

\section*{Appendix 1: $V=4$}

\begin{center}
% [inline block 0: 26 envs, 74973 chars -> data_tex | \begin{tabular}{|c|c|c|c|c|c|c} \hline...]


\end{document}